\newcommand{\Sd}{\operatorname{Sd}}
\newtheorem{theorem}{Theorem}[section]
\newtheorem{corollary}[theorem]{Corollary}
\newtheorem{proposition}[theorem]{Proposition}
\newtheorem{remark}[theorem]{Remark}
\newtheorem{observation}[theorem]{Observation}
\begin{document}

\title{The Simultaneous Metric Dimension of Graph Families}
\author{Y. Ram\'{\i}rez-Cruz$^{(1)}$, O. R. Oellermann$^{(2)}$  and J. A.
Rodr\'{\i}guez-Vel\'{a}zquez$^{(1)}$\\
$^{(1)}${\small Departament d'Enginyeria Inform\`atica i Matem\`atiques,}\\
{\small Universitat Rovira i Virgili,}  {\small Av. Pa\"{\i}sos
Catalans 26, 43007 Tarragona, Spain.} \\{\small
yunior.ramirez\@@urv.cat,  juanalberto.rodriguez\@@urv.cat}\\
$^{(2)}${\small Department of Mathematics and Statistics, University of Winnipeg}\\{\small Winnipeg, MB R3B 2E9, Canada.}\\{\small o.oellermann@uwinnipeg.ca}}

\maketitle

\begin{abstract}
A vertex $v\in V$ is said to \textit{resolve} two vertices $x$ and $y$ if $d_G(v,x)\ne d_G(v,y)$.
A set $S\subset V$ is said to be a \emph{metric generator} for $G$ if any pair of vertices of $G$ is
resolved by some element of $S$. A minimum metric generator is called a \emph{metric basis}, and its cardinality, $\dim(G)$, the \emph{metric dimension} of $G$.
A set $S\subseteq V$ is said to be a simultaneous metric generator for a graph family ${\cal G}=\{G_1,G_2,\ldots,G_k\}$, defined on a common (labeled) vertex set,
if it is a metric generator for every graph of the family. A minimum cardinality simultaneous metric generator is called a simultaneous metric basis, and its
cardinality the simultaneous metric dimension of ${\cal G}$. We obtain sharp bounds for this invariants for general families of graphs and calculate closed formulae
or tight bounds for the simultaneous metric dimension of several specific graph families. For a given graph $G$ we describe a process for obtaining a lower bound
on the maximum number of graphs in a family containing $G$ that has simultaneous metric dimension equal to $\dim(G)$. It is shown that the problem of finding the simultaneous metric dimension of families of trees is $NP$-hard.
Sharp upper bounds for the simultaneous metric dimension of trees are established. The problem of finding this invariant for families of trees that can be
obtained from an initial tree by a sequence of successive edge-exchanges is considered. For such families of trees sharp upper and lower bounds for the simultaneous metric dimension are established.
\end{abstract}

\section{Introduction}
A generator of a metric space is a set $S$ of points in the space with the property that every point of the space is uniquely determined by its distances from the elements of $S$. Given a simple and connected graph $G=(V,E)$, we consider the function $d_G:V\times V\rightarrow \mathbb{N} \cup\{0\}$, where $d_G(x,y)$ is the length of a shortest path between $u$ and $v$ and $\mathbb{N}$ is the set of positive integers. Then $(V,d_G)$ is a metric space since $d_G$ satisfies (i) $d_G(x,x)=0$  for all $x\in V$, (ii) $d_G(x,y)=d_G(y,x)$  for all $x,y \in V$ and (iii) $d_G(x,y)\le d_G(x,z)+d_G(z,y)$  for all $x,y,z\in V$. A vertex $v\in V$ is said to \textit{resolve} two vertices $x$ and $y$ if $d_G(v,x)\ne d_G(v,y)$.
A set $S\subset V$ is said to be a \emph{metric generator} for $G$ if any pair of vertices of $G$ is
resolved by some element of $S$. A minimum cardinality metric generator is called a \emph{metric basis}, and its cardinality the \emph{metric dimension} of $G$, denoted by $\dim(G)$.

Motivated by the problem of uniquely determining the location of an intruder in a network, by means of a set of devices each of which can detect its distance to the intruder, the concepts of a metric generator and metric basis of a graph were introduced by Slater in \cite{Slater1975} where metric generators were called \emph{locating sets}.  Harary and Melter independently introduced the same concept in \cite{Harary1976}, where metric generators were called \emph{resolving sets}. Applications of the metric dimension to the navigation of robots in networks are discussed in \cite{Khuller1996} and applications to chemistry in \cite{Chartrand2000,Johnson1993,Johnson1998}.  This invariant was studied further in a number of other papers including, for instance \cite{Bailey2011,Caceres2007,Chartrand2000,Currie2001,Feng2012,Guo2012raey,
Haynes2006,Melter1984,Oellermann2006,Saenpholphat2004,Hernando2005,IMRAN2013,JanOmo2012,Saputro2013,Sebo2004,Yero2011}.

The navigation problem proposed in \cite{Khuller1996} deals with the movement of a robot in a ``graph space''. The
robot can locate itself by the presence of distinctively labeled ``landmarks'' in the graph
space.  On a graph,  there is neither the concept of direction nor that
of visibility. Instead, it was assumed in  \cite{Khuller1996} that a robot navigating on a graph can sense the distances
to a set of landmarks. If the robot knows its distances to a sufficiently large number of landmarks, its position
on the graph is uniquely determined. This suggests the following problem: given a graph $G$, what
are the fewest number of landmarks needed, and where should they be located, so that the
distances to the landmarks uniquely determine the robot's position on $G$?
This problem is thus equivalent to determining the metric dimension and a metric basis of $G$.

In this article we consider the following extension of this problem. Suppose that the topology of the navigation network may change within a range of possible graphs, say $G_1,G_2,...,G_k$. This scenario may reflect the use of a dynamic network whose links change over time, etc. In this case, the above mentioned problem becomes that of  determining  the minimum cardinality of a set $S$ of vertices which is simultaneously a metric generator for each graph $G_i$, $i\in  \{1,...,k\}$. So, if $S$ is a solution to this problem, then the position of a robot can be uniquely determined by the distance to the elements of $S$, regardless of the graph $G_i$ that models the network along whose edges the robot moves at each moment.

On the other hand the graphs $G_1, G_2, \ldots, G_k$ may also be the topologies of several communication networks on the same set of nodes. These communication networks may, for example, operate at different frequencies. In this case a set $S$ of nodes that resolves each $G_i$ would allow us to uniquely determine the location of an intruder into this family of networks.

Given a family ${\mathcal G}=\{G_1,G_2,...,G_k\}$ of (not necessarily edge-disjoint) connected graphs $G_i=(V,E_i)$ with common vertex set $V$ (the union of whose edge sets is not necessarily the complete graph), we define a \textit{simultaneous metric generator} for ${\mathcal{G}}$ to be a set $S\subset V$ such that $S$ is simultaneously a metric generator for each $G_i$. We say that a smallest simultaneous metric generator for ${\mathcal{G}}$ is  a \emph{simultaneous metric basis} of ${\mathcal{G}}$, and
its cardinality the \emph{simultaneous metric dimension} of ${\mathcal{G}}$, denoted by $\Sd({\mathcal{G}})$ or explicitly by $\Sd( G_1,G_2,...,G_k )$. An example is shown in Figure  \ref{ExSimultaneousBasis} where $\{v_3,v_4\}$ is a simultaneous metric basis of $\{G_1,G_2,G_3\}$.

\begin{figure}[h]
\label{Ex1SimultaneousBasis}
\begin{center}
\begin{tikzpicture}
[inner sep=0.7mm, place/.style={circle,draw=black!40,
fill=white,thick},xx/.style={circle,draw=black!99,fill=black!99,thick},
transition/.style={rectangle,draw=black!50,fill=black!20,thick},line width=1pt,scale=0.5]
\coordinate (A) at (-15,-3);
\coordinate (B) at (-13,0);
\coordinate (C) at (-8,0);
\coordinate (D) at (-15,3);

\coordinate (E) at (-5,-3);
\coordinate (F) at (-3,0);
\coordinate (G) at (2,0);
\coordinate (H) at (-5,3);

\coordinate (I) at (5,-3);
\coordinate (J) at (7,0);
\coordinate (K) at (12,0);
\coordinate (L) at (7,3);

\draw[black!40] (A)--(B) -- (D) -- (A);
\draw[black!40] (B) -- (C);

\draw[black!40] (E) -- (G) -- (H)--(E)--(F);

\draw[black!40] (I)--(J) -- (K) -- (L);

\node at (A) [place]  {};
\coordinate [label=center:{$v_1$}] (v11) at (-14,-3.1);
\node at (B) [place]  {};
\coordinate [label=center:{$v_2$}] (v12) at (-12.5,0.5);
\node at (C) [xx]  {};
\coordinate [label=center:{$v_3$}] (v13) at (-8.5,0.5);
\node at (D) [xx]  {};
\coordinate [label=center:{$v_4$}] (v14) at (-14,2.9);
\node at (E) [place]  {};
\coordinate [label=center:{$v_1$}] (v21) at (-4,-3.1);
\node at (F) [place]  {};
\coordinate [label=center:{$v_2$}] (v22) at (-2.5,0.5);
\node at (G) [xx]  {};
\coordinate [label=center:{$v_3$}] (v23) at (2.5,0.5);
\node at (H) [xx]  {};
\coordinate [label=center:{$v_4$}] (v24) at (-4,2.9);
\node at (I) [place]  {};
\coordinate [label=center:{$v_1$}] (v31) at (6,-3.1);
\node at (J) [place]  {};
\coordinate [label=center:{$v_2$}] (v32) at (7.5,0.5);
\node at (K) [xx]  {};
\coordinate [label=center:{$v_3$}] (v33) at (12.5,0.5);
\node at (L) [xx]  {};
\coordinate [label=center:{$v_4$}] (v34) at (6,2.9);

\coordinate [label=center:{$G_1$}] (G1) at (-12,-4);
\coordinate [label=center:{$G_2$}] (G2) at (-2,-4);
\coordinate [label=center:{$G_3$}] (G3) at (8,-4);

\end{tikzpicture}

\end{center}
\caption{The set $\{v_3,v_4\}$ is a simultaneous metric basis of $\{G_1,G_2,G_3\}$. Thus, $\Sd( G_1,G_2,G_3 )=2$.}
\end{figure}
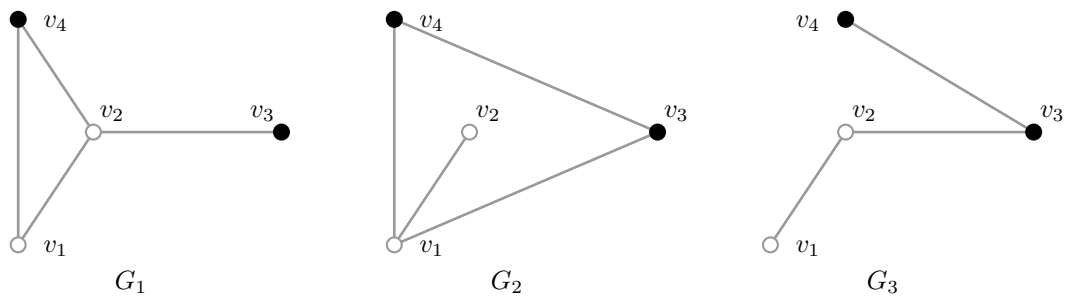

The study of simultaneous parameters in graphs was introduced by Brigham and  Dutton in \cite{Brigham1990}, where they studied simultaneous  domination. 
This should not be  confused with studies on families sharing a constant value on a parameter, for instance the study presented in \cite{IMRAN2013}, where several graph families all of whose members have the same metric dimension are studied.

We will use the notation $K_n$,  $C_n$, $N_n$ and $P_n$ to denote a complete graph,  a cycle, an empty graph, and a path  of order $n$, respectively. Let $G$ be a graph and $u,v$ vertices of $G$. We use $u \sim v$ to indicate that $u$ is adjacent with $v$ and $u \nsim v$ to indicate that $u$ is not adjacent with $v$.  The {\em diameter} of a graph $G$, denoted by $D(G)$, is the maximum distance between a pair of vertices in $G$. For the remainder of the paper, definitions will be introduced whenever a concept is needed.

\section{General Bounds}\label{sectBasicResults}

\begin{observation}\label{cotaTrivialSimultaneous}
For any  family ${\mathcal G}=\{G_1,G_2,...,G_k\}$ of  connected graphs  with common vertex set $V$ and any subfamily ${\mathcal H}$ of ${\mathcal G}$,
$$\Sd({\mathcal H})  \le \Sd({\mathcal G})\le  min \{\vert V \vert-1, \sum_{i=1}^k \dim(G_i)\}.$$
In particular, $$\max_{i\in \{1,...,k\}} \{\dim(G_i)\}\le \Sd({\mathcal G}).$$
\end{observation}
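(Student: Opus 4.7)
The statement bundles together four simple inequalities, so my plan is to dispatch each one in turn using only the definitions, and to identify the one singleton-subfamily observation that ties the ``in particular'' clause to the rest.

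First I would establish the monotonicity $\Sd(\mathcal{H}) \le \Sd(\mathcal{G})$ directly from the definition: if $S$ is a simultaneous metric basis for $\mathcal{G}$, then $S$ resolves every graph in $\mathcal{G}$, hence every graph in the subfamily $\mathcal{H}$, so $S$ is a simultaneous metric generator for $\mathcal{H}$ and $\Sd(\mathcal{H}) \le |S| = \Sd(\mathcal{G})$. Specializing to $\mathcal{H} = \{G_i\}$, where $\Sd(\{G_i\}) = \dim(G_i)$, immediately yields $\dim(G_i) \le \Sd(\mathcal{G})$ for every $i$, which gives the ``in particular'' conclusion after taking the maximum over $i$.

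Next I would handle the right-hand upper bound in two pieces. For $\Sd(\mathcal{G}) \le \sum_{i=1}^k \dim(G_i)$, pick a metric basis $B_i$ of each $G_i$ and set $S = \bigcup_{i=1}^k B_i$. Since $B_i \subseteq S$ and $B_i$ already resolves every pair of vertices in $G_i$, the set $S$ resolves each $G_i$, so $S$ is a simultaneous metric generator for $\mathcal{G}$ of cardinality at most $\sum_i |B_i| = \sum_i \dim(G_i)$. For the bound $\Sd(\mathcal{G}) \le |V|-1$, fix any vertex $v \in V$ and take $S = V \setminus \{v\}$. Given any two distinct vertices $x, y \in V$, at least one of them, say $x$, lies in $S$; then in every $G_i$ we have $d_{G_i}(x,x) = 0 \neq d_{G_i}(x,y)$ because $x \neq y$. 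Hence $S$ resolves every pair in every $G_i$.

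There is no genuine obstacle here: each inequality follows from unpacking the definition of simultaneous metric generator and exhibiting an explicit witness (a union of bases, or $V$ minus a single vertex). The only point requiring a modicum of care is noting that distinct vertices of a connected graph have positive distance from each other, which is what makes $V\setminus\{v\}$ always a resolving set; everything else is bookkeeping.
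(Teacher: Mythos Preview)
Your proposal is correct. The paper presents this as an observation without proof, treating each inequality as immediate from the definitions; your argument supplies exactly the natural verifications one would expect (including the fact that $V\setminus\{v\}$ is always a metric generator, which the paper invokes explicitly later in the proof of Theorem~\ref{ExtremeCaseAbove}).
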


The above inequalities are sharp. For instance, for the family of graphs shown in Figure  \ref{Ex1SimultaneousBasis} we have
$\Sd(G_1,G_2,G_3)=2=\dim(G_1)=\dim(G_2)=\displaystyle\max_{i\in \{1,2,3\}}\{\dim(G_i)\}$, while for the family of graphs shown in Figure \ref{Ex2SimultaneousBasis} we have
$\Sd(G_1,G_2,G_3)=3=\vert V\vert -1.$

The following result is a direct consequence of Observation~\ref{cotaTrivialSimultaneous}.

\begin{corollary}\label{corollaryCompleteInFamily}
Let ${\mathcal G}$ be a family of  connected graphs  with  the same vertex set. If $K_n\in {\mathcal G}$, then
$$\Sd({\mathcal G})=n-1.$$
\end{corollary}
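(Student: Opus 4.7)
The plan is to squeeze $\Sd(\mathcal{G})$ between matching lower and upper bounds, both of which come directly from Observation~\ref{cotaTrivialSimultaneous}, once we plug in the well-known value $\dim(K_n)=n-1$.

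First I would recall (or very briefly justify) that $\dim(K_n)=n-1$: in $K_n$ every pair of distinct vertices is at distance $1$, so any vertex $v$ not in a metric generator $S$ satisfies $d_{K_n}(s,v)=1=d_{K_n}(s,u)$ for every $s\in S$ and every other $u\notin S\cup\{v\}$; hence at most one vertex can be omitted from a metric generator, forcing $|S|\ge n-1$, and conversely any $(n-1)$-subset is clearly resolving.

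Next I would invoke the right-hand inequality of Observation~\ref{cotaTrivialSimultaneous}, which gives
\[
\Sd(\mathcal{G})\le |V|-1 = n-1,
\]
since the common vertex set has cardinality $n$. For the matching lower bound I would use the particular case of the same observation,
\[
\max_{i\in\{1,\dots,k\}}\dim(G_i)\le \Sd(\mathcal{G}),
\]
and note that because $K_n\in\mathcal{G}$, the maximum on the left is at least $\dim(K_n)=n-1$. Combining the two inequalities yields $\Sd(\mathcal{G})=n-1$.

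There is essentially no obstacle here; the only thing worth being careful about is making sure the reader sees that $K_n$ being in the family automatically pins both bounds in Observation~\ref{cotaTrivialSimultaneous} to the same value $n-1$, so the sandwich collapses. No case analysis, no construction of a specific basis, and no structural argument about the other members $G_i$ of $\mathcal{G}$ is required.
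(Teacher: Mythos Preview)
Your argument is correct and matches the paper's approach: the corollary is stated there as a direct consequence of Observation~\ref{cotaTrivialSimultaneous}, exactly the sandwich you describe using $\dim(K_n)=n-1$. Nothing needs to be added or changed.
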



As shown in Figure \ref{Ex2SimultaneousBasis}, the converse of Corollary \ref{corollaryCompleteInFamily} does not hold.

\begin{figure}[h]\label{Ex2SimultaneousBasis}
\begin{center}
\begin{tikzpicture}
[inner sep=0.7mm, place/.style={circle,draw=black!40,
fill=white,thick},xx/.style={circle,draw=black!99,fill=black!99,thick},
transition/.style={rectangle,draw=black!50,fill=black!20,thick},line width=1pt,scale=0.5]
\coordinate (A) at (-15,-3);
\coordinate (B) at (-9,-3);
\coordinate (C) at (-9,3);
\coordinate (D) at (-15,3);

\coordinate (E) at (-5,-3);
\coordinate (F) at (1,-3);
\coordinate (G) at (1,3);
\coordinate (H) at (-5,3);

\coordinate (I) at (5,-3);
\coordinate (J) at (11,-3);
\coordinate (K) at (11,3);
\coordinate (L) at (5,3);

\draw[black!40] (A)--(B) -- (D) -- (A);
\draw[black!40] (B) -- (C)--(D);

\draw[black!40] (F)--(E);
\draw[black!40] (H)--(F)--(G);

\draw[black!40] (K)--(L)--(I);
\draw[black!40] (L)--(J);

\node at (A) [place]  {};
\coordinate [label=center:{$v_1$}] (v11) at (-15.5,-2.5);
\node at (B) [xx]  {};
\coordinate [label=center:{$v_2$}] (v12) at (-8.3,-2.5);
\node at (C) [xx]  {};
\coordinate [label=center:{$v_3$}] (v13) at (-8.3,2.5);
\node at (D) [xx]  {};
\coordinate [label=center:{$v_4$}] (v14) at (-15.5,2.5);
\node at (E) [place] {};
\coordinate [label=center:{$v_1$}] (v21) at (-5.5,-2.5);
\node at (F) [xx]  {};
\coordinate [label=center:{$v_2$}] (v22) at (1.7,-2.5);
\node at (G) [xx]  {};
\coordinate [label=center:{$v_3$}] (v23) at (1.7,2.5);
\node at (H) [xx]  {};
\coordinate [label=center:{$v_4$}] (v24) at (-5.5,2.5);
\node at (I) [place] {};
\coordinate [label=center:{$v_1$}] (v31) at (4.5,-2.5);
\node at (J) [xx]  {};
\coordinate [label=center:{$v_2$}] (v32) at (11.7,-2.5);
\node at (K) [xx]  {};
\coordinate [label=center:{$v_3$}] (v33) at (11.7,2.5);
\node at (L) [xx]  {};
\coordinate [label=center:{$v_4$}] (v34) at (4.5,2.5);

\coordinate [label=center:{$G_1$}] (G1) at (-12,-4);
\coordinate [label=center:{$G_2$}] (G2) at (-2,-4);
\coordinate [label=center:{$G_3$}] (G3) at (8,-4);

\end{tikzpicture}

\end{center}
\caption{The set $\{v_2,v_3,v_4\}$ is a simultaneous metric basis of $\{G_1,G_2,G_3\}$. Thus, $\Sd(G_1,G_2,G_3)=3=n-1$.}
\end{figure}
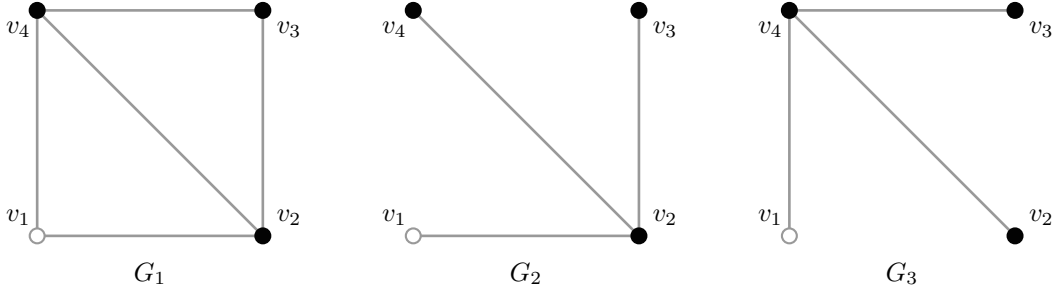



Given a graph $G=(V,E)$ and a vertex $v\in V,$ the set $N_G(v)=\{u\in V:\; u\sim v\}$ is the {\em open neighborhood} of $v$ and the set $N_G[v] = N_G(v)\cup \{v\}$ is the {\em closed neighborhood} of $v$.
Two vertices $x,y\in V(G)$  are \textit{twin vertices} in $G$  if $N_G(x)=N_G(y)$ or $N_G[x]=N_G[y]$.
\begin{theorem}\label{ExtremeCaseAbove}
Let ${\mathcal G}$ be a  family of connected graphs  with the same vertex set $V$. Then $\Sd({\mathcal G})=\vert V\vert-1$ if and only if for every pair $u,v\in V$, there exists a graph $G_{uv}\in {\mathcal G}$ such that $u$ and $v$ are twins in $G_{uv}$.
\end{theorem}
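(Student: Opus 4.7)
The plan is to prove both directions by exploiting the well-known observation that twin vertices can only be distinguished by themselves. Specifically, I would first record the elementary fact that if $u$ and $v$ are twins in a graph $G$ (open or closed), then every $w \in V \setminus \{u,v\}$ satisfies $d_G(w,u)=d_G(w,v)$. For the closed twin case ($N_G[u]=N_G[v]$, hence $u\sim v$) and the open twin case ($N_G(u)=N_G(v)$, hence $u\not\sim v$), a short argument using a shortest $w$--$u$ path and looking at its penultimate vertex (which must then also be adjacent to $v$) gives $d_G(w,v)\le d_G(w,u)$, and symmetrically the reverse inequality. Consequently, any metric generator for $G$ must contain at least one of $u,v$.

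For the \emph{if} direction, suppose every pair $u,v \in V$ has some $G_{uv}\in\mathcal{G}$ in which $u$ and $v$ are twins. Assume for contradiction that $S$ is a simultaneous metric generator with $|S|\le |V|-2$. Then there exist two distinct vertices $u,v \in V \setminus S$. In $G_{uv}$ every element of $S$ lies in $V\setminus\{u,v\}$, so by the preliminary fact no element of $S$ resolves $u$ and $v$ in $G_{uv}$. This contradicts that $S$ is a metric generator for $G_{uv}$. Combined with Observation~\ref{cotaTrivialSimultaneous}, which gives $\Sd(\mathcal{G})\le |V|-1$, we conclude $\Sd(\mathcal{G})=|V|-1$.

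For the \emph{only if} direction, I would argue the contrapositive: assume there is a pair $u,v\in V$ such that $u$ and $v$ are \emph{not} twins in any $G_i\in\mathcal{G}$, and show $\Sd(\mathcal{G})\le |V|-2$ by exhibiting $S:=V\setminus\{u,v\}$ as a simultaneous metric generator. For each $G_i$, any pair containing a vertex of $S$ is trivially resolved by that vertex itself (distance $0$ vs.\ positive distance). The only remaining pair is $\{u,v\}$. Since $u,v$ are not twins in $G_i$, we have $N_{G_i}(u)\ne N_{G_i}(v)$ and $N_{G_i}[u]\ne N_{G_i}[v]$; a small case check (splitting on whether $u\sim v$ in $G_i$) produces a vertex $w\in V\setminus\{u,v\}=S$ that is adjacent to exactly one of $u,v$, and such a $w$ resolves the pair.

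The only mildly delicate point is the case analysis guaranteeing that the distinguishing neighbor $w$ actually lies in $V\setminus\{u,v\}$ rather than equaling $u$ or $v$; but this follows because $w\in N_{G_i}(u)$ forces $w\ne u$, and $w=v$ would force $u\sim v$, contradicting whichever neighborhood equality is being violated. Once this is in place the two implications combine to the theorem.
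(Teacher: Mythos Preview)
Your proof is correct and follows essentially the same approach as the paper. Both directions hinge on the standard fact that two vertices are twins if and only if they are equidistant from every other vertex; your ``if'' direction matches the paper's almost verbatim, and your ``only if'' direction is simply the contrapositive of the paper's direct argument (the paper shows that if $V\setminus\{u,v\}$ fails to resolve $u,v$ in some $G_{uv}$ then $u,v$ must be twins there, whereas you show that if $u,v$ are non-twins in every $G_i$ then $V\setminus\{u,v\}$ resolves them in each $G_i$).
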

\begin{proof}
We first note that for any connected graph $G=(V,E)$ and any vertex $v\in V$ the set $V-\{v\}$ is a metric generator for $G$. So, if $\Sd({\mathcal G})=\vert V\vert-1$, then for every $v\in V$, the set  $V-\{v\}$ is a simultaneous metric basis of ${\mathcal G}$ and, as a consequence, for every $u\in V-\{v\}$ there exists a graph $G_{uv}\in {\mathcal G}$ such that the set $V-\{u,v\}$ is not a metric generator for $G_{uv}$, \textit{i.e.},
for every $x\in V-\{u,v\}$ we have $d_{G_{u,v}}(u,x)=d_{G_{u,v}}(v,x)$. So $u$ and $v$ must be twin vertices in $G_{u,v}$.

Conversely, if for every $u,v\in V$ there exists a graph $G_{uv}\in {\mathcal G}$ such that $u$ and $v$ are twin vertices in $G_{uv}$, then for any simultaneous metric basis $B$ of $ {\mathcal G}$ either $u\in B$ or $v\in B$. Hence, all but one element of $V$ must belong to $B$. Therefore $\vert B \vert\ge \vert V\vert -1$ and, by Observation \ref{cotaTrivialSimultaneous}, we conclude that $\Sd({\mathcal G})=\vert V\vert-1$.
\end{proof}

Notice that Corollary \ref{corollaryCompleteInFamily} is also a consequence of Theorem \ref{ExtremeCaseAbove} as is the next result.

\begin{corollary}
 Let ${\mathcal G}$ be a  family of connected graphs  with the same vertex set $V$. If ${\mathcal G}$ contains three star graphs having different centers, then
$\Sd({\mathcal G})=\vert V\vert-1$.
\end{corollary}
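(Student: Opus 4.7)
The plan is to invoke Theorem \ref{ExtremeCaseAbove} and show that its hypothesis is satisfied: for every pair $u,v\in V$ we must exhibit some graph in $\mathcal{G}$ in which $u$ and $v$ are twin vertices. Let the three stars in $\mathcal{G}$ be denoted $S_1, S_2, S_3$ with pairwise distinct centers $c_1, c_2, c_3 \in V$. The key observation is that in any star with center $c$, every two vertices distinct from $c$ are leaves with common open neighborhood $\{c\}$, so they are (open) twins.

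Given any pair $u,v \in V$, I claim we can always find an index $i\in\{1,2,3\}$ such that $c_i \notin \{u,v\}$, which then makes $u$ and $v$ twin leaves in $S_i$. Indeed, the set $\{u,v\}$ contains at most two elements, so it can hit at most two of the three distinct centers $c_1, c_2, c_3$; hence at least one center lies outside $\{u,v\}$, as required. Checking the cases: if $\{u,v\}\cap\{c_1,c_2,c_3\}=\emptyset$, any $S_i$ works; if $u=c_j$ for some $j$, then any star centered at a $c_i$ with $i\neq j$ and $c_i\neq v$ works, and such an $i$ exists because the remaining two centers cannot both equal $v$.

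Thus, for every pair $u,v\in V$, there is some $G_{uv}\in \mathcal{G}$ in which $u$ and $v$ are twins. Applying Theorem \ref{ExtremeCaseAbove} yields $\Sd(\mathcal{G}) = |V|-1$. There is no real obstacle in this argument beyond the small case analysis above; the only thing to make sure of is that the three centers are genuinely distinct (which is given by the hypothesis), since two stars with distinct centers would not suffice—a pair $\{u,v\}=\{c_1,c_2\}$ would then have no third star available to twin them.
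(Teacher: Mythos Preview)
Your proof is correct and follows exactly the approach the paper intends: the paper states this corollary as a direct consequence of Theorem~\ref{ExtremeCaseAbove} without spelling out the details, and you have supplied precisely the pigeonhole argument (three distinct centers, a pair $\{u,v\}$ of size two, hence some center avoids both) that verifies the twin-vertex hypothesis of that theorem.
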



It was shown in \cite{Chartrand2000} that for any connected graph $G$ of order $n$ and diameter $D(G)$,
\begin{equation}\label{cotadiametro}
\dim (G)\le n-D(G).
\end{equation}
Our next result is an extension of (\ref{cotadiametro}) to the case of the simultaneous metric dimension.
\begin{theorem}
 Let ${\mathcal G}$ be a  family of graphs with common vertex set $V$ that have a shortest path of length $d$ in common.    Then  $$ \Sd({\mathcal G})\le \vert V\vert-d.$$
\end{theorem}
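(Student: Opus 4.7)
The plan is to mimic the classical proof of the bound $\dim(G)\le n-D(G)$ from \cite{Chartrand2000}, observing that the single hypothesis we need from each member of $\mathcal{G}$ is that a particular path is geodesic.

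Let $P: v_0,v_1,\ldots,v_d$ denote the path of length $d$ which is a shortest path simultaneously in every $G_i\in\mathcal{G}$. I would take as my candidate simultaneous metric generator
\[
S \;=\; V\setminus\{v_1,v_2,\ldots,v_d\},
\]
which has cardinality $|V|-d$ and contains $v_0$. It then suffices to verify that $S$ is a metric generator for each $G_i\in\mathcal{G}$ individually, because any pair of distinct vertices with at least one endpoint in $S$ is automatically resolved by that endpoint (the distance from a vertex to itself is $0$). So the only pairs that require attention are those contained in $V\setminus S=\{v_1,\ldots,v_d\}$.

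The key step is the following single observation: since $P$ is a shortest path in $G_i$, we have $d_{G_i}(v_0,v_j)=j$ for every $j\in\{0,1,\ldots,d\}$. Consequently the vector of distances from $v_0$ to the vertices $v_1,\ldots,v_d$ is $(1,2,\ldots,d)$, whose entries are pairwise distinct. Hence $v_0\in S$ resolves every pair $v_i,v_j$ with $1\le i<j\le d$ in $G_i$. Combining this with the previous paragraph shows that $S$ resolves all pairs of vertices in $G_i$, so $S$ is a simultaneous metric generator for $\mathcal{G}$, yielding $\Sd(\mathcal{G})\le|S|=|V|-d$.

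There is essentially no obstacle here: the proof in the single-graph setting used only that $v_0$ lies at distinct distances from $v_1,\ldots,v_d$ along a geodesic, and the hypothesis that $P$ is common to all $G_i$ gives precisely this information in every member of the family. The only subtlety worth flagging is that ``a shortest path in common'' must mean the very same vertex sequence is geodesic in every $G_i$ (not merely that each $G_i$ has some geodesic of length $d$); under this reading the argument above is complete and the bound is tight, as witnessed, for example, by a single path graph $P_n$ where $d=n-1$ and $\Sd=\dim=1=n-d$.
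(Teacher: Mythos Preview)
Your proof is correct and follows essentially the same approach as the paper: both take $S=V\setminus\{v_1,\ldots,v_d\}$ and use the fact that $d_{G_i}(v_0,v_j)=j$ to show $v_0\in S$ resolves all pairs among $v_1,\ldots,v_d$. Your write-up is slightly more explicit about why pairs meeting $S$ are automatically resolved, but the argument is the same.
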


\begin{proof}
Let ${\mathcal G}=\{G_1,G_2,...,G_k\}$ be a  family of graphs with common vertex set $V$ having a  shortest path $v_0,v_1,...,v_d$  in common. Let $W=V-\{v_1,...,v_d\}$. Since $d_{G_j}(v_0,v_i)=i$,  for $i\in \{1,...,d\}$, we conclude that $W$ is  a metric generator for each $G_j$.
Hence, $ \Sd({\mathcal G})\le \vert W\vert= \vert V\vert-d.$
\end{proof}

Let $r \ge 3$ be an integer. Label the vertices of $K_r$ and $K_{1,r-1}$ with the same set of labels and suppose $c$ is the label of the centre of the star $K_{1,r-1}$. Let $P_d$, $d \ge 2$, be an $a$--$b$ path of order $d$ whose vertex set is disjoint from that of $K_r$. Let $G_1$ be the graph obtained from the complete graph $K_r=(V',E')$, $r\ge 3$, and the path graph $P_d$, $d\ge 2$,  by identifying the leaf $a$ of $P_d$,  with the vertex $c$ of $K_r$ and calling it $c$, and let $G_2$ be the graph obtained by  identifying the leaf $a$ of $P_d$ with the center $c$ of the star $K_{1,r-1}$ and also calling it $c$. In this case, $G_1$ and  $G_2$ have the same vertex set $V$ (where $\vert V \vert=d+r-1$). For any $v\in V(K_r)-\{c\}$ we have $d_{G_1}(b,v)=d_{G_2}(b,v)=d$ and  $ V(P_d)\cup \{v\}$ is a shortest path of length $d$ in both graphs  $G_1$ and $G_2$. Moreover, $W=(V'-\{v,c\})\cup \{b\}$  is a simultaneous metric basis of $\{G_1,G_2\}$ and so $\Sd(G_1,G_2)=\vert V \vert-d$. Therefore, the above bound is sharp.

\section{Simultaneous Metric Dimension of Families of Graphs with Small Metric Dimension}
In this section we focus on families of graphs on the same vertex set each of which have dimension 1 or 2. It was shown in \cite{Chartrand2000} that  $\dim(G)=1$ if and only if $G$ is a path. The first result in this section deals with families of graphs for which the simultaneous metric dimension is as small as possible.

\begin{theorem}\label{familyPaths}
Let ${\mathcal G}$ be a  family of connected graphs  on a common vertex set. Then
\begin{enumerate}
\item $\Sd({\mathcal G})=1$ if and only if ${\mathcal G}$ is a collection of paths that share a common leaf.
\item If  ${\mathcal G}$ is a collection of paths, then $1\le \Sd({\mathcal G})\le 2.$
\end{enumerate}
\end{theorem}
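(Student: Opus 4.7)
The plan is to combine Observation~\ref{cotaTrivialSimultaneous} with the classical characterization $\dim(G)=1$ iff $G$ is a path (cited from \cite{Chartrand2000}) and the elementary fact that only a leaf of a path is a singleton metric generator.

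For part (1), I would first apply Observation~\ref{cotaTrivialSimultaneous} to deduce that $\dim(G_i)\le \Sd({\mathcal G})=1$ for every $G_i\in{\mathcal G}$, so each $G_i$ is a path. If $\{v\}$ is a simultaneous metric basis then $\{v\}$ is a metric basis for every $G_i$, and I would argue that in any path of order at least three an internal vertex cannot resolve all pairs because its two neighbors sit at the same distance from it. Hence $v$ must be a leaf of every $G_i$, i.e.\ a common leaf. For the converse, if $v$ is a common leaf of the paths $G_1,\dots,G_k$, then along each $G_i$ the distances from $v$ to the other vertices take the values $1,2,\dots,|V|-1$ pairwise distinctly, so $\{v\}$ simultaneously resolves ${\mathcal G}$ and $\Sd({\mathcal G})=1$.

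For part (2), the bound $\Sd({\mathcal G})\ge 1$ is immediate, so the core of the argument is the upper bound. Rather than trying to choose landmarks cleverly based on the various paths, I would establish the stronger statement that \emph{every} pair of distinct vertices $\{u,w\}\subseteq V$ is a metric generator for every path on $V$, which gives $\Sd({\mathcal G})\le 2$ uniformly. Given a single path on $V$, I would relabel its vertices consecutively along the path as $v_1,\dots,v_n$ with $u=v_i$ and $w=v_j$, $i\ne j$. If two vertices $v_k$ and $v_\ell$ had identical distance vectors, then $|k-i|=|\ell-i|$ and $|k-j|=|\ell-j|$; assuming $k\ne \ell$, the two equations force $k+\ell=2i$ and $k+\ell=2j$, contradicting $i\ne j$. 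Hence $\{u,w\}$ resolves the path.

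I do not expect any real obstacle, as the proof is quite direct. The only nontrivial ingredient is the external characterization of paths by metric dimension from \cite{Chartrand2000}, and the only conceptual point worth highlighting is that in a path every pair of distinct vertices (not only pairs containing a leaf) is a resolving set, which is precisely what makes the uniform bound $2$ possible without having to coordinate choices across the family.
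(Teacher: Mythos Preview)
Your proposal is correct and follows essentially the same approach as the paper: both use the characterization $\dim(G)=1$ iff $G$ is a path, observe that an internal vertex fails to resolve its two neighbours, and invoke the fact that any two distinct vertices of a path form a metric generator. The only difference is that you spell out the verification of this last fact via the equations $k+\ell=2i=2j$, whereas the paper simply asserts it.
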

\begin{proof}
 Thus, if $\Sd({\mathcal G})=1$, then the family ${\mathcal G}$ is a collection of paths.

Moreover, if $v$ is a vertex of degree $2$ in a path $P$, then $v$ does not distinguish its neighbours and, as a consequence, $\{v\}$ is a metric basis of $P$ if and only if $v$ is a leaf of $P$.  Therefore, 1. follows.

Since any path has metric dimension $1$, and any pair of distinct vertices of a path $P$ is a metric generator for $P$, we conclude that  2. follows.
\end{proof}

\begin{theorem}\label{pathsAndMore}
 Let ${\mathcal G}$ be a  family of graphs on a common vertex set $V$ such that $\mathcal{G}$ does not only consist of paths.  Let ${\mathcal H}$ be the collection of elements of  ${\mathcal G}$ which are not paths.   Then  $$ \Sd({\mathcal G})=\Sd({\mathcal H}).$$
\end{theorem}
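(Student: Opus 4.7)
The plan is to establish the two inequalities $\Sd(\mathcal{H}) \le \Sd(\mathcal{G})$ and $\Sd(\mathcal{G}) \le \Sd(\mathcal{H})$ separately. The first direction is immediate: since $\mathcal{H}$ is a subfamily of $\mathcal{G}$, Observation~\ref{cotaTrivialSimultaneous} gives $\Sd(\mathcal{H}) \le \Sd(\mathcal{G})$.

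For the reverse direction, I would take any simultaneous metric basis $S$ of $\mathcal{H}$ and show that $S$ is in fact a simultaneous metric generator for all of $\mathcal{G}$. The graphs in $\mathcal{G} \setminus \mathcal{H}$ are, by definition of $\mathcal{H}$, exactly the paths in $\mathcal{G}$, so the task reduces to verifying that $S$ resolves every such path. The key quantitative observation is that $\mathcal{H}$ is nonempty (because $\mathcal{G}$ does not consist only of paths), and every $G \in \mathcal{H}$ is non-path, whence $\dim(G) \ge 2$ by the characterization of dimension-one graphs from \cite{Chartrand2000}. Consequently $|S| \ge \max_{G \in \mathcal{H}} \dim(G) \ge 2$.

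Now I would invoke the fact, already established in the proof of Theorem~\ref{familyPaths}, that every pair of distinct vertices is a metric generator for any path on the common vertex set $V$. Since $|S| \ge 2$, any two-element subset of $S$ already resolves each path $P \in \mathcal{G} \setminus \mathcal{H}$, and hence $S$ itself is a metric generator for every such $P$. Combined with the assumption that $S$ resolves every graph in $\mathcal{H}$, this shows that $S$ is a simultaneous metric generator for $\mathcal{G}$, yielding $\Sd(\mathcal{G}) \le |S| = \Sd(\mathcal{H})$.

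There is no real obstacle here; the argument is essentially a bookkeeping exercise once one notes the crucial dichotomy that a single vertex suffices to resolve a path only when it is a leaf, while \emph{any} two vertices suffice. The only subtlety to flag in the write-up is the need for $\mathcal{H}$ to be nonempty to ensure the lower bound $|S| \ge 2$, which is guaranteed by the hypothesis that $\mathcal{G}$ is not a collection of paths.
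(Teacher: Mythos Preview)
Your proposal is correct and follows essentially the same approach as the paper: both arguments use Observation~\ref{cotaTrivialSimultaneous} for the inequality $\Sd(\mathcal{H}) \le \Sd(\mathcal{G})$, then show that a simultaneous metric basis of $\mathcal{H}$ has at least two elements and hence resolves every path in $\mathcal{G}\setminus\mathcal{H}$. The only cosmetic difference is that the paper obtains $|S|\ge 2$ by invoking Theorem~\ref{familyPaths}(1), whereas you argue directly from the characterization $\dim(G)=1 \Leftrightarrow G$ is a path.
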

\begin{proof}
Since ${\mathcal H}$ is a non-empty subfamily of ${\mathcal G}$ we conclude that $\Sd({\mathcal G})\ge \Sd({\mathcal H}).$
From Theorem \ref{familyPaths}(1), it follows that $\Sd({\mathcal H})\ge 2$. Moreover, as any pair of vertices of a path $P$  is a metric generator for $P$, it follows that if $B\subseteq V$ is a simultaneous metric basis of ${\mathcal H}$, then $B$ is a simultaneous metric generator for  ${\mathcal G}$ and, as a result,
$\Sd({\mathcal G})\le \vert B\vert =\Sd({\mathcal H}).$
\end{proof}

\begin{theorem}\label{familyCycles}
 Let ${\mathcal G}=\{G_1,G_2,...,G_k\}$ be a  family of cycles on a common vertex set $V.$ Then the following assertions hold:
 \begin{enumerate}
\item If  $\vert V\vert$ is odd, then $ \Sd({\mathcal G})=2$.
\item If  $\vert V\vert$ is even, then  $2 \le \Sd({\mathcal G})\le 3$. Moreover,  for $\vert V\vert$  even, $\Sd({\mathcal G})=2$ if and only if there are two vertices $u,v\in V$ which are not mutually antipodal in $G_i$ for every $i\in \{1,...,k\}$.
\item If $\vert V\vert$ is even and $k < n-1$, then $\Sd({\mathcal{G}}) = 2$. Moreover, this result is best possible in the sense that there is a family of $(n-1)$ cycles of order $n$ on the same vertex set whose simultaneous metric dimension is $3$.
\end{enumerate}
\end{theorem}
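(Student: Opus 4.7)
The plan is to reduce everything to a clean modular-arithmetic description of resolution in a cycle. For an arbitrary cycle $G$ on $V$, label the vertices $0,1,\ldots,n-1$ in cyclic order. The key observation I would establish first is that, for any $s\in V$ and distinct $x,y\in V$, we have $d_G(s,x)=d_G(s,y)$ if and only if $x+y\equiv 2s\pmod{n}$ (the equidistant pairs from $s$ are precisely the reflections through $s$). Every subsequent question about $\mathrm{Sd}$ then reduces to counting solutions of the congruence $2s\equiv c\pmod{n}$. Since $\dim(C_n)=2$, Observation~\ref{cotaTrivialSimultaneous} already gives $\mathrm{Sd}(\mathcal{G})\ge 2$ throughout.

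With this tool I would dispatch item 1 and the ``moreover'' of item 2 at the same time. A pair $\{u,v\}$ with $u\ne v$ is a metric generator for $G$ iff the congruence $2u\equiv 2v\pmod n$ has no ``extra'' solutions beyond $u$ itself: for $n$ odd this is automatic, so any pair of distinct vertices is a simultaneous basis; for $n$ even the extra solution is exactly $v\equiv u+n/2$, so $\{u,v\}$ is a metric generator of $G$ iff $u,v$ are not antipodal in $G$. This immediately proves item 1 and the antipodality characterization in item 2.

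For the upper bound $\mathrm{Sd}(\mathcal{G})\le 3$ in item 2, I would show that any three distinct vertices $\{u,v,w\}$ resolve every cycle on $V$: a failure in some cycle would force $2u\equiv 2v\equiv 2w\pmod n$, hence pairwise equality or antipodality of $u,v,w$; but antipodality is a perfect matching on $V$, so no three distinct vertices can be pairwise antipodal.

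Finally, for item 3 I would combine a counting argument with an explicit construction. The antipodality relation in each $G_i$ is a perfect matching on $V$, so it contributes exactly $n/2$ antipodal pairs; hence at most $kn/2$ pairs are antipodal in some $G_i$, and when $k<n-1$ this is strictly less than $\binom{n}{2}$, leaving a pair that avoids antipodality in every $G_i$ and therefore serves as a simultaneous basis. To show $k=n-1$ can force $\mathrm{Sd}=3$, I would fix a $1$-factorization $M_1,\ldots,M_{n-1}$ of $K_n$ (which exists because $n$ is even) and, for each $M_j=\{\{a_1,b_1\},\ldots,\{a_{n/2},b_{n/2}\}\}$, build a Hamilton cycle $G_j$ on $V$ by ordering the vertices as $a_1,\ldots,a_{n/2},b_1,\ldots,b_{n/2}$; then $a_i,b_i$ sit at cyclic distance $n/2$, and no other pair does, so the antipodal pairs of $G_j$ are exactly $M_j$. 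Since $M_1,\ldots,M_{n-1}$ partition the edges of $K_n$, every pair of vertices is antipodal in some $G_j$ and no pair can serve as a simultaneous basis, giving $\mathrm{Sd}(\mathcal{G})=3$. The main obstacle is assembling this extremal family; once the 1-factorization is in hand, the ordering trick makes the antipodal structure match $M_j$ exactly, and the rest is bookkeeping around the reflection/congruence fact.
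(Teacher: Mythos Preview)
Your argument is correct and covers all three parts. The overall architecture matches the paper's---characterize resolving pairs in a single cycle via antipodality, observe that three vertices always suffice because antipodality is a matching, and for item~3 combine the pigeonhole count $kn/2<\binom{n}{2}$ with an explicit extremal family---but you package two of the steps differently.

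First, you replace the paper's ad~hoc case analysis with the reflection identity $d_G(s,x)=d_G(s,y)\iff x+y\equiv 2s\pmod n$ in each cycle's cyclic labelling. This is a genuine streamlining: the odd/even dichotomy, the antipodality criterion for pairs, and the impossibility of three pairwise-antipodal vertices all fall out of the solvability of $2X\equiv c\pmod n$, rather than being argued separately. Second, for the extremal family in item~3 the paper describes a greedy procedure on an upper-triangular array that assigns antipodal pairs to cycles one at a time; your construction instead takes a $1$-factorization $M_1,\ldots,M_{n-1}$ of $K_n$ and realizes each $M_j$ as the antipodal matching of a Hamilton cycle via the ordering $a_1,\ldots,a_{n/2},b_1,\ldots,b_{n/2}$. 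The two constructions produce the same kind of object (a family of $n-1$ cycles whose antipodal matchings partition $\binom{V}{2}$), but yours is cleaner to state and to verify, since the existence of a $1$-factorization of $K_n$ for even $n$ is a standard fact and the ordering trick makes it immediate that the antipodal pairs of $G_j$ are exactly $M_j$. The paper's greedy description, by contrast, leaves the reader to check that the process never gets stuck.
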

\begin{proof}
The result is clear for $\vert V\vert=3$. Let $C_n$ be a cycle of order $\vert V\vert=n\ge 4$. We first assume  that $n$ is odd. In this case, given   four different vertices $u,v,x,y\in V(C_n)$ we have $d_{C_n}(u,x)\ne d_{C_n}(u,y)$ or $d_{C_n}(v,x)\ne d_{C_n}(v,y)$. Hence, we conclude that $\{u,v\}$ is a metric generator for $C_n$ and, since $\dim(C_n)>1$, we conclude that $\{u,v\}$ is a metric basis for $C_n$. Thus, $\{u,v\}$ is a simultaneous metric basis for ${\mathcal G}$.
Therefore, in this case $\Sd({\mathcal G})= 2$. Thus 1. holds.

From now on we assume that  $\vert V\vert =n$ is even. Note that in this case every $G_i$ is a $2$-antipodal\footnote{We recall that $G=(V(G),E(G))$ is $2$-antipodal if for each vertex $x\in V(G)$ there exists exactly one vertex $y\in V(G)$ such that $d_G(x,y)=D(G)$.}  graph. Let $u,v\in V(C_n)$ be two vertices which are not mutually antipodal in $C_n$. Since, for every pair of distinct vertices $x,y\in V(C_n)$, we have $d_{C_n}(u,x)\ne d_{C_n}(u,y)$ or $d_{C_n}(v,x)\ne d_{C_n}(v,y)$, we conclude that $\{u,v\}$ is a metric generator for $C_n$ and, since $\dim(C_n)>1$, we conclude that $\{u,v\}$ is a metric basis. Clearly, no pair of mutually antipodal vertices form a metric basis for $C_n$. Therefore, $ \Sd({\mathcal G})=2$ if and only if there are two vertices $u,v\in V$ which are not mutually antipodal in $G_i$  for every $i\in \{1,...,k\}$. Suppose that, for every pair of distinct vertices $u,v\in V$, there exists $G_i\in {\mathcal G}$ such that $u$ and $v$ are mutually antipodal in $G_i$. In this case we have $\Sd({\mathcal G})\ge 3$. Now, since for  three different vertices $u,v,w\in V$, only two of them may be  mutually antipodal in $G_i$, we conclude that $\{u,v,w\}$ is a simultaneous metric generator for ${\mathcal G}$. Therefore, in this case, $\Sd({\mathcal G})= 3$. This completes the proof of 2.

Since each of the $k$ cycles in $\mathcal{G}$ has $n/2$ antipodal pairs  it follows that if $k < n-1$ or equivalently $\frac{nk}{2} < {n \choose 2}$, then $\Sd({\mathcal{G}}) = 2$. This inequality is best possible in the sense that there is a collection of $(n-1)$ cycles ${\mathcal{G}}=\{C_1',C_2', \ldots, C_{n-1}'\}$ with vertex set $\{1,2, \ldots, n\}$ such that each of the ${n \choose 2}$ possible pairs from $\{1,2, \ldots, n\}$ is an antipodal pair on exactly one of these cycles and hence $\Sd({\mathcal{G}})=3$. We construct the labeling of these cycles by assigning pairs of labels to antipodal pairs in such a way that a given pair is assigned to exactly one of these $(n-1)$ cycles. Consider the upper triangular array whose $(i,j)^{th}$ entry is $(i,j)$ for $1 \le i < j \le n$. Select the first non-empty entry in row $1$. This entry is the ordered pair $(1,2)$. Begin by assigning the labels $1$ and $2$ to the vertices in positions 1 and $n/2$ on $C_1'$. Now mark rows and columns $1$ and $2$ used and mark the pair $(1,2)$ as unavailable. Find the first unused row and subject to this the first unused column and let the corresponding entry in the array be say  $(i_{1_2},j_{1_2})$. Assign $i_{1_2}$ and $j_{1_2}$ to vertices in positions $2$ and $1+n/2$ on $C_1'$ and mark both rows and columns $i_{1_2}$ and $j_{1_2}$ as used and the pair $(i_{1_2},j_{1_2})$ as unavailable. Next find the first available pair in the first unused row and subject to this in an unused column, say $(i_{1_3},j_{1_3})$. Assign the labels $i_{1_3}$ and $j_{1_3}$ to the vertices in $C_1'$ in positions $3$ and $2+n/2$, respectively. We continue this process until all rows and columns of the array have been marked used. Moreover, whenever the entries of an ordered pair are used as labels of vertices in $C_1'$ we mark that pair as unavailable. Now reset the labels on all rows and columns in the array as unused but do not reset the labels on the ordered pairs. Next find the first  available entry  say $(i_{2_1},j_{2_1})$ in row 1 and assign $i_{2_1}$ and $j_{2_1}$ to the vertices in positions $1$ and $n/2$, respectively, of $C_2'$. Mark  rows and columns $i_{2_1}$ and $j_{2_1}$ as used and mark the pair $(i_{2_1},j_{2_1})$ as unavailable. Now find the first non-empty available entry  in the first unmarked row and subject to this in the first unmarked column, say $(i_{2_2},j_{2_2})$, and assign $i_{2_2}$ and $j_{2_2}$ to vertices in positions $2$ and $1+n/2$ in $C_2'$. Continue in this manner until entries of each ordered pair in the triangular array have been assigned as labels to antipodal vertices in one of the cycles in $\mathcal{G}$. Then $\Sd({\mathcal{G}}) = 3$. This completes the proof of 3.

\end{proof}

As a direct consequence of Theorems~\ref{pathsAndMore} and~\ref{familyCycles}  we can obtain the following result for the special case of families composed solely of paths and cycles.

\begin{corollary}
Let ${\mathcal G}=\{G_1,G_2,...,G_k\}$ be a  family of  cycles and paths  with the same vertex set that contains at least one cycle. Then the following assertions hold:
 \begin{enumerate}
\item If  $\vert V\vert$ is odd, then $ \Sd({\mathcal G})=2$.
\item If  $\vert V\vert$ is even, then  $2 \le \Sd({\mathcal G})\le 3.$ Moreover,  for $\vert V \vert$ even,
$\Sd({\mathcal G})=2$ if and only if there are two vertices $u,v \in V$ which are not mutually antipodal in $G_i$ for every cycle $G_i\in {\mathcal G}$.
\item If  $\vert V\vert$ is even and $\mathcal{G}$ contains fewer than $(n-1)$ cycles, then $\Sd({\mathcal{G}}) =2$. Moreover, there is a family $\mathcal{G}$ containing $(n-1)$ cycles such that $\Sd({\mathcal{G}})=3$.
\end{enumerate}
\end{corollary}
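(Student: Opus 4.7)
The plan is to reduce the statement entirely to its counterpart for families of cycles by stripping away all the paths, and then to read off each of the three items from the corresponding item of Theorem~\ref{familyCycles}. Concretely, I would set ${\mathcal H}$ to be the sub-family of ${\mathcal G}$ consisting of those graphs which are not paths. Since ${\mathcal G}$ contains at least one cycle, ${\mathcal H}$ is non-empty, and since every member of ${\mathcal G}$ is either a cycle or a path, ${\mathcal H}$ is precisely the collection of cycles in ${\mathcal G}$. Theorem~\ref{pathsAndMore} then gives $\Sd({\mathcal G}) = \Sd({\mathcal H})$, so any bound or characterization proved for ${\mathcal H}$ transfers to ${\mathcal G}$.

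For item~(1), I would apply Theorem~\ref{familyCycles}(1) to the family ${\mathcal H}$: when $|V|$ is odd it yields $\Sd({\mathcal H}) = 2$, and therefore $\Sd({\mathcal G}) = 2$. For item~(2), Theorem~\ref{familyCycles}(2) applied to ${\mathcal H}$ gives $2 \le \Sd({\mathcal H}) \le 3$ together with the characterization of equality with $2$ in terms of the existence of two vertices that are not mutually antipodal in any member of ${\mathcal H}$. Since the members of ${\mathcal H}$ are exactly the cycles contained in ${\mathcal G}$, the quantification ``for every $i\in\{1,\ldots,k\}$'' over ${\mathcal H}$ is the same as the quantification ``for every cycle $G_i \in {\mathcal G}$'' appearing in the corollary.

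For item~(3), assume $|V|=n$ is even and ${\mathcal G}$ contains fewer than $n-1$ cycles. Then ${\mathcal H}$ contains fewer than $n-1$ cycles, and Theorem~\ref{familyCycles}(3) gives $\Sd({\mathcal H}) = 2$, hence $\Sd({\mathcal G}) = 2$. Sharpness is inherited directly: the family of $n-1$ cycles on a common vertex set with simultaneous metric dimension $3$ constructed in Theorem~\ref{familyCycles}(3) is itself a family of the type considered here, since the corollary only requires that \emph{at least one} cycle be present.

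I do not expect any real obstacle: the corollary is an essentially immediate consequence of the two theorems cited. The only mildly delicate point is the translation in item~(2) of the antipodal condition from the language of ${\mathcal H}$ into the language of cycles $G_i\in{\mathcal G}$ used in the statement, and this is automatic once ${\mathcal H}$ is identified with the set of cycles in ${\mathcal G}$.
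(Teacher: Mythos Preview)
Your proposal is correct and matches the paper's approach exactly: the paper states the corollary as ``a direct consequence of Theorems~\ref{pathsAndMore} and~\ref{familyCycles}'' without giving any further argument, and your proof simply spells out that reduction.
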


\section{Large Families of Graphs with a Fixed Basis and Large Common Induced Subgraph}

In this section we show that there may be large families of graphs on the same vertex set with small simultaneous metric dimension.
We accomplish this by describing a general approach for constructing large families of labeled graphs on the same vertex set for which the simultaneous metric dimension attains the lower bound given in Observation \ref{cotaTrivialSimultaneous}. Moreover we show that such a family of graphs contain large isomorphic common induced subgraphs.

Let $G=(V,E)$ be a graph and let  $Perm(V)$ be the set of all permutations of $V$.
 Given a subset $X\subseteq V$, the stabilizer of $X$ is the set of permutations
 ${\cal S}(X)=\{f\in Perm(V): f(x)=x, \; \mbox{\rm for every } x\in X\}$. As usual, we denote by $f(X)$ the image of a subset $X$ under $f$, \textit{i.e}., $f(X)=\{f(x):\; x\in X\}$.

Let $B$ be metric basis of a graph $G=(V,E)$ of diameter $D(G)$. For any $r \in \{0,1,...,D(G)\}$ we define the set $${\rm \bf B}_{r}(B)=\displaystyle\bigcup_{x\in B}\{y\in V:\; d_G(x,y)\le r\}.$$
In particular, ${\rm {\bf  B}}_{0}(B)=B$ and ${\rm {\bf  B}}_{1}(B)=\displaystyle\bigcup_{x\in B}N_G[x] $. Moreover,  since $B$ is a metric basis of $G$, $|{\rm {\bf  B}}_{D(G)-1}(B)|\ge |V|-1$.

Let $G$ be a connected graph that is not complete. Given a permutation $f\in {\cal{S}}(B)$ of $V$ we say that a graph $G'=(V,E')$ belongs to the  family  ${\cal {G}}_f$ if and only if $N_{G'}(f(v))=f(N_G(v))$, for every $v\in {\rm {\bf  B}}_{D(G)-2}(B)$. In particular, if $D(G)=2$ and $f\in {\cal S}(B)$, then  $G'=(V,E')$ belongs to the  family  ${\cal  G}_f$ if and only if $N_{G'}(x)=f(N_G(x))$, for every $x\in B$. Moreover, if $G$ is a complete graph, we defined ${\cal{G}}_f=\{G\}$.

\begin{remark}\label{path_preserving}
Let $B$ be a metric basis of a connected non-complete graph $G$,  let $f\in {\cal{S}}(B)$ and  $G'\in  {\cal {G}}_f$. Then for any $b\in B$ and $k\in \{1,...,D(G)-1\}$, a sequence $b=v_0,v_1,...,v_{k-1},v_k=v$ is a path in $G$ if and only if the sequence $b=f(v_0),f(v_1),...,f(v_{k-1}),f(v_k)=f(v)$ is a path in $G'$.
\end{remark}
\begin{proof}
Let  $b\in {\rm \bf  B}$.  Since $G'\in {\cal  G}_f$ and $b=v_0 \in {\rm {\bf  B}}_{D(G)-2}(B)$, we have that  $f(v_{1})\in N_{G'}(f(v_{0}))$ if and only if $v_{1}\in N_G(v_0)$ and, in general, if $v_i \in {\rm {\bf  B}}_{D(G)-2}(B)$, then $f(v_{i+1})\in N_{G'}(f(v_{i}))$ if and only if $v_{i+1}\in N_G(v_i)$. Therefore, for any $k\in \{1,...,D(G)-1\}$, a sequence $(b=)f(v_0),f(v_1),...,f(v_{k-1}),$ $f(v_k)(=f(v))$ is a path in $G'$  if and only if $(b=)v_0,v_1,...,v_{k-1},v_k(=v)$ is a path in $G$.
\end{proof}

 \begin{corollary}\label{Lemma-Paths-in-G'}
 Let $B$ be a metric basis of a connected  graph $G$,  let $f\in {\cal S}(B)$ and  $G'\in  {\cal  G}_f$. Then for any $b\in B$ and $v\in {\rm {\bf  B}}_{D(G)-1}(B)$,
 $d_G(b,v)=k$ if and only if $d_{G'}(b,f(v))=k.$
 \end{corollary}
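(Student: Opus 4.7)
The plan is to prove the equivalence by induction on $k$, using Remark \ref{path_preserving} at each step to transfer shortest paths from $b$ between $G$ and $G'$ via the permutation $f$. Specifically, I will establish the stronger claim: for every $k \in \{0, 1, \ldots, D(G)-1\}$, every $b \in B$, and every $v \in V$, $d_G(b,v) = k$ if and only if $d_{G'}(b, f(v)) = k$. The base case $k=0$ is immediate, since $d_G(b,v) = 0$ iff $v = b$; because $f$ fixes $B$ pointwise, this is equivalent to $f(v) = b$, hence to $d_{G'}(b, f(v)) = 0$.

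For the inductive step, assume the claim for all $j \le k$ and suppose $k+1 \le D(G)-1$. If $d_G(b,v) = k+1$, take a shortest $b$-$v$ path $b = v_0, v_1, \ldots, v_{k+1} = v$ in $G$. For each $i \le k$, $d_G(b,v_i) = i \le k \le D(G)-2$, so $v_i \in {\rm \bf B}_{D(G)-2}(B)$. Remark \ref{path_preserving} therefore provides the $G'$-path $b = f(v_0), f(v_1), \ldots, f(v_{k+1}) = f(v)$, giving $d_{G'}(b, f(v)) \le k+1$. The contrapositive of the inductive hypothesis (applied at each $j \le k$) yields $d_{G'}(b, f(v)) > k$, so $d_{G'}(b, f(v)) = k+1$ exactly. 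The reverse implication is symmetric: starting from a shortest $b$-$f(v)$ path $b = u_0, u_1, \ldots, u_{k+1} = f(v)$ in $G'$, the inductive hypothesis forces $d_G(b, f^{-1}(u_i)) = i \le D(G)-2$ for each $i \le k$, placing each $f^{-1}(u_i)$ in ${\rm \bf B}_{D(G)-2}(B)$; Remark \ref{path_preserving} then lifts this to a $G$-path of length $k+1$ from $b$ to $v$, so $d_G(b,v) \le k+1$, and the IH again supplies the matching lower bound.

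The main obstacle is ensuring that Remark \ref{path_preserving} is actually applicable when invoked, i.e., that intermediate vertices on the paths we handle lie in ${\rm \bf B}_{D(G)-2}(B)$. In the forward direction this is automatic from the bound $d_G(b, v_i) \le D(G)-2$ along a shortest path from $b$; in the reverse direction, the inductive hypothesis itself is precisely what pulls the required containment back through $f^{-1}$, making the two directions work in parallel. The hypothesis $v \in {\rm \bf B}_{D(G)-1}(B)$ guarantees that some basis element witnesses $v$ within distance $D(G)-1$, which rules out the degenerate possibility $d_G(b',v) = D(G)$ for all $b' \in B$ and so keeps the induction's range of $k$ meaningful.
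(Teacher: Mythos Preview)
Your argument is correct and rests on the same idea the paper intends: transfer paths between $G$ and $G'$ via Remark~\ref{path_preserving}. The paper simply records the corollary without proof, expecting the reader to read it off directly from the biconditional in Remark~\ref{path_preserving}: if $d_G(b,v)=k\le D(G)-1$, push a shortest $b$--$v$ path through $f$ to get $d_{G'}(b,f(v))\le k$, and if $d_{G'}(b,f(v))<k$ pull a shortest $G'$-path back through $f^{-1}$ to contradict $d_G(b,v)=k$; the other direction is symmetric. Your induction on $k$ packages the same two applications of Remark~\ref{path_preserving}, but the inductive hypothesis is doing work that Remark~\ref{path_preserving} already does on its own (its ``only if'' direction immediately rules out a shorter path in the other graph). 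So your proof is sound but more elaborate than necessary; the extra verification that the preimages $f^{-1}(u_i)$ lie in ${\rm \bf B}_{D(G)-2}(B)$ is also redundant, since Remark~\ref{path_preserving} is stated as a full biconditional for any sequence of length at most $D(G)-1$ starting at $b$. One small point: your closing remark about the hypothesis $v\in{\rm \bf B}_{D(G)-1}(B)$ is slightly off---that hypothesis does \emph{not} force $d_G(b,v)\le D(G)-1$ for the particular $b$ in the statement, only for some basis vertex---but since your induction already covers every $k\le D(G)-1$ for arbitrary $v\in V$, this does not affect the validity of what you proved.
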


 \begin{corollary}\label{Lemma_induced_subgraphs_inG'}
 Let $B$ be a metric basis of a connected  graph $G$,  let $f\in {\cal{S}}(B)$ and  $G'\in  {\cal{G}}_f$. Then $\langle {\rm {\bf{B}}}_{D(G)-2}(B) \rangle \cong \langle {\rm {\bf{B}}}_{D(G')-2}(B) \rangle$.
 \end{corollary}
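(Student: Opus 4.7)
My plan is to exhibit $f$ itself, restricted to ${\rm {\bf B}}_{D(G)-2}(B)$, as the desired isomorphism onto $\langle {\rm {\bf B}}_{D(G')-2}(B)\rangle$. There are two things to check: that $f$ preserves adjacency on ${\rm {\bf B}}_{D(G)-2}(B)$, and that $f$ maps this set onto ${\rm {\bf B}}_{D(G')-2}(B)$.

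Adjacency preservation is immediate from the defining condition of ${\cal G}_f$: for any $v\in {\rm {\bf B}}_{D(G)-2}(B)$ the identity $N_{G'}(f(v))=f(N_G(v))$ holds in $G'$, so for $u,v\in {\rm {\bf B}}_{D(G)-2}(B)$ one has $f(u)\sim f(v)$ in $G'$ iff $f(u)\in N_{G'}(f(v))=f(N_G(v))$ iff $u\in N_G(v)$ iff $u\sim v$ in $G$, where the middle equivalence uses that $f$ is a bijection of $V$.

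For the set-theoretic identification I would apply Corollary \ref{Lemma-Paths-in-G'} to transport distances from $B$ under $f$: for every $v\in {\rm {\bf B}}_{D(G)-1}(B)$ and every $b\in B$, $d_G(b,v)=d_{G'}(b,f(v))$, so $v$ is within distance $r$ of $B$ in $G$ if and only if $f(v)$ is within distance $r$ of $B$ in $G'$, whenever $r\le D(G)-1$. In particular, taking $r=D(G)-2$ gives $f({\rm {\bf B}}_{D(G)-2}(B))\subseteq \{w\in V:\min_{b\in B}d_{G'}(b,w)\le D(G)-2\}$. The reverse containment I would obtain by applying the same distance-preservation argument to $f^{-1}$, after verifying that swapping the roles of $G$ and $G'$ (and replacing $f$ by $f^{-1}$) is consistent with the defining relation of ${\cal G}_f$.

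The main obstacle is matching the two diameter-dependent radii: to conclude the equality $f({\rm {\bf B}}_{D(G)-2}(B))={\rm {\bf B}}_{D(G')-2}(B)$ rather than an inclusion indexed by $D(G)-2$, one must establish $D(G)=D(G')$. I would prove this by noting that Corollary \ref{Lemma-Paths-in-G'} forces $B$ to remain a metric generator for $G'$, with $B$-distance vectors preserved under $f$ on the at-least-$(|V|-1)$ vertices of ${\rm {\bf B}}_{D(G)-1}(B)$, while the possible single exceptional vertex is pinned down by being the only one lying outside. Comparing the eccentricities of the vertices of $B$ in the two graphs, which agree on this shared large set, then forces the two diameters to coincide. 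With $D(G)=D(G')$ in hand, the two inclusions combine into the required set identity, and together with adjacency preservation they give the claimed isomorphism.
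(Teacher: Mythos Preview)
Your adjacency-preservation step is correct and in fact more direct than the paper's: the paper extends a shortest $b$--$u$ path (with $b\in B$) by the edge $uv$ and invokes Remark~\ref{path_preserving}, whereas you simply read off $f(u)\in N_{G'}(f(v))\Longleftrightarrow u\in N_G(v)$ from the defining relation $N_{G'}(f(v))=f(N_G(v))$.

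The genuine gap is your claim that $D(G)=D(G')$; this is false. The paper's own illustration of ${\cal G}_f$ already furnishes a counterexample: there $G$ (with $v_6$ adjacent to every other vertex and with the additional path $v_1v_2v_3v_4v_5$) has $D(G)=2$ and $B=\{v_1,v_5\}$, while the family member $G_3$ has edge set $\{v_6v_5,\,v_5v_3,\,v_3v_1,\,v_1v_4,\,v_4v_2,\,v_2v_3\}$ and $d_{G_3}(v_6,v_4)=4$, so $D(G_3)=4$. Your eccentricity argument cannot be repaired: Corollary~\ref{Lemma-Paths-in-G'} only transports a distance $d_G(b,v)$ when that distance is at most $D(G)-1$, and it says nothing at all about distances between two non-basis vertices, so neither basis-vertex eccentricities nor the global diameter are controlled. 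In this same example $\langle{\rm {\bf B}}_{D(G)-2}(B)\rangle_G$ is the edgeless graph on $\{v_1,v_5\}$, while ${\rm {\bf B}}_{D(G_3)-2}(B)$ computed in $G_3$ is all of $V$, so the two induced subgraphs do not even have the same order. The paper's proof, for its part, never attempts to identify $f({\rm {\bf B}}_{D(G)-2}(B))$ with ${\rm {\bf B}}_{D(G')-2}(B)$: it establishes only that $f$ restricted to ${\rm {\bf B}}_{D(G)-2}(B)$ preserves adjacency into $G'$, which yields $\langle{\rm {\bf B}}_{D(G)-2}(B)\rangle_G\cong\langle f({\rm {\bf B}}_{D(G)-2}(B))\rangle_{G'}$ rather than the statement as literally written.
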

 \begin{proof}
 Since $G'\in  {\cal  G}_f$, the function $f$ is a bijection from $V(G)$ onto $V(G')$. It remains to show that the restriction of $f$ to $\langle {\rm {\bf{B}}}_{D(G)-2}(B) \rangle$ is an isomorphism, i.e., we need to show that $uv$ is an edge of $\langle {\rm {\bf{B}}}_{D(G)-2}(B) \rangle$ if and only if $f(u)f(v)$ is an edge of $\langle {\rm {\bf{B}}}_{D(G')-2}(B) \rangle$. Let $u, v \in {\rm {\bf{B}}}_{D(G)-2}(B)$. Let $k$ be the length of a shortest path from the set $\{u,v\}$ to the set $B$. Then there is a $b \in B$ such that $k = min\{d_G(b,u), d_G(b,v)\} \le D(G)-2$. We may assume $d_G(b,u) = k$.
 So there is a path $(b=)v_0,v_1,...,v_{k-1},v_k(=u)$ in $\langle {\rm {\bf  B}}_{D(G)-2}(B)\rangle$. By Remark \ref{path_preserving} $(b=)v_0,v_1,...,v_{k-1},v_k(=u), v$ is a path in $G$ if and only if $(b=)f(v_0),f(v_1),...,f(v_{k-1}),f(v_k)(=f(u)), f(v)$ is a path in $G'$. So $uv \in E(\langle {\rm {\bf  B}}_{D(G)-2}(B) \rangle)$ if and only if $f(u)f(v) \in E(\langle {\rm {\bf  B}}_{D(G')-2}(B) \rangle)$.
 \end{proof}

 Now we define a family of graphs ${\cal{G}}_B$, associated with $B$, as follows.
$${\cal{G}}_B=\displaystyle\bigcup_{f\in {\cal S}(B)}{\cal{G}}_f.$$

Notice that if  ${\rm {\bf{B}}}_{D(G)-2}(B)\subsetneq V$, then  any graph $G'\in {\cal{G}}_B$ is isomorphic to a graph $G^*=(V,E^*)$ whose edge set $E^*$ can be partitioned into two sets $E^*_1$, $E^*_2$, where $E^*_1$ consists of all edges of $G$ having at least one vertex in ${\rm {\bf  B}}_{D(G)-2}(B)$ and $E^*_2$ is a subset of edges of a complete graph whose vertex set is $V-{\rm {\bf{B}}}_{D(G)-2}(B)$. Hence, ${\cal{G}}_B$ contains
$2^{\frac{l(l-1)}{2}}|V-B|!$ different labeled graphs, where $l=|V-{\rm {\bf  B}}_{D(G)-2}(B)|$.
Clearly, if $|{\rm {\bf  B}}_{D(G)-1}(B)|=|V|$, then all these graphs are connected and if $|{\rm {\bf  B}}_{D(G)-1}(B)|=|V|-1$, then
 $2^{\frac{(l-1)(l-2)}{2}}(2^{l-1}-1)|V-B|!$ of these graphs are connected.

\begin{figure}[h]
\label{ExSimultaneousBasis}
\begin{center}
\begin{tikzpicture}
[inner sep=0.7mm, place/.style={circle,draw=black!40,
fill=white,thick},xx/.style={circle,draw=black!99,fill=black!99,thick},
transition/.style={rectangle,draw=black!50,fill=black!20,thick},line width=1pt,scale=0.5]


\coordinate (A) at (1,20);
\coordinate (B) at (4.5,17);
\coordinate (C) at (4.5,18.5);
\coordinate (D) at (4.5,20);
\coordinate (E) at (4.5,21.5);
\coordinate (F) at (4.5,23);

\draw[black!40] (A) -- (B) -- (C) -- (D) -- (E) -- (F) -- (A);
\draw[black!40] (A) -- (C);
\draw[black!40] (A) -- (D);
\draw[black!40] (A) -- (E);

\node at (A) [place] {};
\node at (B) [xx] {};
\node at (C) [place] {};
\node at (D) [place] {};
\node at (E) [place] {};
\node at (F) [xx] {};

\coordinate [label=left:{$v_6$}] (v6) at (0.5,20);
\coordinate [label=left:{$v_1$}] (v1) at (6,17);
\coordinate [label=left:{$v_2$}] (v2) at (6,18.5);
\coordinate [label=left:{$v_3$}] (v3) at (6,20);
\coordinate [label=left:{$v_4$}] (v4) at (6,21.5);
\coordinate [label=left:{$v_5$}] (v5) at (6,23);

\coordinate [label=left:{$G$}] (G) at (3.5,16.5);


\coordinate [label=center:{$f$}] (f) at (11.5,23);
\coordinate [label=center:{$v_1 \rightarrow v_1$}] (f11) at (11.5,22);
\coordinate [label=center:{$v_2 \rightarrow v_4$}] (f24) at (11.5,21);
\coordinate [label=center:{$v_3 \rightarrow v_2$}] (f32) at (11.5,20);
\coordinate [label=center:{$v_4 \rightarrow v_6$}] (f46) at (11.5,19);
\coordinate [label=center:{$v_5 \rightarrow v_5$}] (f55) at (11.5,18);
\coordinate [label=center:{$v_6 \rightarrow v_3$}] (f63) at (11.5,17);



[inner sep=0.7mm, place/.style={circle,draw=black!40,
fill=white,thick},xx/.style={circle,draw=black!99,fill=black!99,thick},
transition/.style={rectangle,draw=black!50,fill=black!20,thick},line width=1pt,scale=0.5]


\coordinate (A1) at (1,4);
\coordinate (B1) at (4.5,1);
\coordinate (C1) at (4.5,2.5);
\coordinate (D1) at (4.5,4);
\coordinate (E1) at (4.5,5.5);
\coordinate (F1) at (4.5,7);

\draw[black!40] (A1)--(B1) -- (C1) -- (D1) -- (E1) -- (F1) -- (A1);
\draw[black!40] (A1) -- (C1);
\draw[black!40] (A1) -- (D1);
\draw[black!40] (A1) -- (E1);

\node at (A1) [place] {};
\node at (B1) [xx] {};
\node at (C1) [place] {};
\node at (D1) [place] {};
\node at (E1) [place] {};
\node at (F1) [xx] {};

\coordinate [label=left:{$v_3$}] (v31) at (0.5,4);
\coordinate [label=left:{$v_1$}] (v11) at (6,1);
\coordinate [label=left:{$v_4$}] (v41) at (6,2.5);
\coordinate [label=left:{$v_2$}] (v21) at (6,4);
\coordinate [label=left:{$v_6$}] (v61) at (6,5.5);
\coordinate [label=left:{$v_5$}] (v51) at (6,7);

\coordinate [label=left:{$G_5$}] (G1) at (3.5,0.5);


\coordinate (A2) at (8,4);
\coordinate (B2) at (11.5,1);
\coordinate (C2) at (11.5,2.5);
\coordinate (D2) at (11.5,4);
\coordinate (E2) at (11.5,5.5);
\coordinate (F2) at (11.5,7);

\draw[black!40] (A2)--(B2) -- (C2) -- (D2) -- (E2) -- (F2) -- (A2);
\draw[black!40] (A2) -- (C2);
\draw[black!40] (A2) -- (E2);

\node at (A2) [place] {};
\node at (B2) [xx] {};
\node at (C2) [place] {};
\node at (D2) [place] {};
\node at (E2) [place] {};
\node at (F2) [xx] {};

\coordinate [label=left:{$v_3$}] (v32) at (7.5,4);
\coordinate [label=left:{$v_1$}] (v12) at (13,1);
\coordinate [label=left:{$v_4$}] (v42) at (13,2.5);
\coordinate [label=left:{$v_2$}] (v22) at (13,4);
\coordinate [label=left:{$v_6$}] (v62) at (13,5.5);
\coordinate [label=left:{$v_5$}] (v52) at (13,7);

\coordinate [label=left:{$G_6$}] (G2) at (10.5,0.5);


\coordinate (A3) at (15,4);
\coordinate (B3) at (18.5,1);
\coordinate (C3) at (18.5,2.5);
\coordinate (D3) at (18.5,4);
\coordinate (E3) at (18.5,5.5);
\coordinate (F3) at (18.5,7);

\draw[black!40] (A3)--(B3) -- (C3) -- (A3) -- (E3) -- (F3) -- (A3);
\draw[black!40] (A3) -- (D3);

\node at (A3) [place] {};
\node at (B3) [xx] {};
\node at (C3) [place] {};
\node at (D3) [place] {};
\node at (E3) [place] {};
\node at (F3) [xx] {};

\coordinate [label=left:{$v_3$}] (v33) at (14.5,4);
\coordinate [label=left:{$v_1$}] (v13) at (20,1);
\coordinate [label=left:{$v_4$}] (v43) at (20,2.5);
\coordinate [label=left:{$v_2$}] (v23) at (20,4);
\coordinate [label=left:{$v_6$}] (v63) at (20,5.5);
\coordinate [label=left:{$v_5$}] (v53) at (20,7);

\coordinate [label=left:{$G_7$}] (G3) at (17.5,0.5);


\coordinate (A4) at (22,4);
\coordinate (B4) at (25.5,1);
\coordinate (C4) at (25.5,2.5);
\coordinate (D4) at (25.5,4);
\coordinate (E4) at (25.5,5.5);
\coordinate (F4) at (25.5,7);

\draw[black!40] (A4)--(B4) -- (C4) -- (D4) -- (A4) -- (E4) -- (F4) -- (A4);

\node at (A4) [place] {};
\node at (B4) [xx] {};
\node at (C4) [place] {};
\node at (D4) [place] {};
\node at (E4) [place] {};
\node at (F4) [xx] {};

\coordinate [label=left:{$v_3$}] (v34) at (21.5,4);
\coordinate [label=left:{$v_1$}] (v14) at (27,1);
\coordinate [label=left:{$v_4$}] (v44) at (27,2.5);
\coordinate [label=left:{$v_2$}] (v24) at (27,4);
\coordinate [label=left:{$v_6$}] (v64) at (27,5.5);
\coordinate [label=left:{$v_5$}] (v54) at (27,7);

\coordinate [label=left:{$G_8$}] (G4) at (24.5,0.5);


\coordinate (A5) at (1,12);
\coordinate (B5) at (4.5,9);
\coordinate (C5) at (4.5,10.5);
\coordinate (D5) at (4.5,12);
\coordinate (E5) at (4.5,13.5);
\coordinate (F5) at (4.5,15);

\draw[black!40] (A5) -- (B5) -- (C5) -- (D5) -- (E5) -- (F5) -- (A5);
\draw[black!40] (E5) arc (90:270:1.5);

\node at (A5) [place] {};
\node at (B5) [xx] {};
\node at (C5) [place] {};
\node at (D5) [place] {};
\node at (E5) [place] {};
\node at (F5) [xx] {};

\coordinate [label=left:{$v_3$}] (v35) at (0.5,12);
\coordinate [label=left:{$v_1$}] (v15) at (6,9);
\coordinate [label=left:{$v_4$}] (v45) at (6,10.5);
\coordinate [label=left:{$v_2$}] (v25) at (6,12);
\coordinate [label=left:{$v_6$}] (v65) at (6,13.5);
\coordinate [label=left:{$v_5$}] (v55) at (6,15);

\coordinate [label=left:{$G_1$}] (G5) at (3.5,8.5);


\coordinate (A6) at (8,12);
\coordinate (B6) at (11.5,9);
\coordinate (C6) at (11.5,10.5);
\coordinate (D6) at (11.5,12);
\coordinate (E6) at (11.5,13.5);
\coordinate (F6) at (11.5,15);

\draw[black!40] (A6) -- (B6) -- (C6) -- (D6) -- (E6) -- (F6) -- (A6);
\draw[black!40] (A6) -- (E6);

\node at (A6) [place] {};
\node at (B6) [xx] {};
\node at (C6) [place] {};
\node at (D6) [place] {};
\node at (E6) [place] {};
\node at (F6) [xx] {};

\coordinate [label=left:{$v_3$}] (v36) at (7.5,12);
\coordinate [label=left:{$v_1$}] (v16) at (13,9);
\coordinate [label=left:{$v_4$}] (v46) at (13,10.5);
\coordinate [label=left:{$v_2$}] (v26) at (13,12);
\coordinate [label=left:{$v_6$}] (v66) at (13,13.5);
\coordinate [label=left:{$v_5$}] (v56) at (13,15);

\coordinate [label=left:{$G_2$}] (G6) at (10.5,8.5);


\coordinate (A7) at (15,12);
\coordinate (B7) at (18.5,9);
\coordinate (C7) at (18.5,10.5);
\coordinate (D7) at (18.5,12);
\coordinate (E7) at (18.5,13.5);
\coordinate (F7) at (18.5,15);

\draw[black!40] (E7) -- (F7) -- (A7) -- (B7) -- (C7) -- (D7) -- (A7);

\node at (A7) [place] {};
\node at (B7) [xx] {};
\node at (C7) [place] {};
\node at (D7) [place] {};
\node at (E7) [place] {};
\node at (F7) [xx] {};

\coordinate [label=left:{$v_3$}] (v37) at (14.5,12);
\coordinate [label=left:{$v_1$}] (v17) at (20,9);
\coordinate [label=left:{$v_4$}] (v47) at (20,10.5);
\coordinate [label=left:{$v_2$}] (v27) at (20,12);
\coordinate [label=left:{$v_6$}] (v67) at (20,13.5);
\coordinate [label=left:{$v_5$}] (v57) at (20,15);

\coordinate [label=left:{$G_3$}] (G7) at (17.5,8.5);


\coordinate (A8) at (22,12);
\coordinate (B8) at (25.5,9);
\coordinate (C8) at (25.5,10.5);
\coordinate (D8) at (25.5,12);
\coordinate (E8) at (25.5,13.5);
\coordinate (F8) at (25.5,15);

\draw[black!40] (E8) -- (F8) -- (A8) -- (B8) -- (C8) -- (D8);
\draw[black!40] (E8) arc (90:270:1.5);

\node at (A8) [place] {};
\node at (B8) [xx] {};
\node at (C8) [place] {};
\node at (D8) [place] {};
\node at (E8) [place] {};
\node at (F8) [xx] {};

\coordinate [label=left:{$v_3$}] (v38) at (21.5,12);
\coordinate [label=left:{$v_1$}] (v18) at (27,9);
\coordinate [label=left:{$v_4$}] (v48) at (27,10.5);
\coordinate [label=left:{$v_2$}] (v28) at (27,12);
\coordinate [label=left:{$v_6$}] (v68) at (27,13.5);
\coordinate [label=left:{$v_5$}] (v58) at (27,15);

\coordinate [label=left:{$G_4$}] (G8) at (24.5,8.5);

\end{tikzpicture}
\end{center}
\caption{$B=\{1,5\}$ is a metric basis of $G$, $f\in {\cal S}(B)$ and  $\{G_1,...,G_8\}\subset {\cal{G}}_f$}
\end{figure}
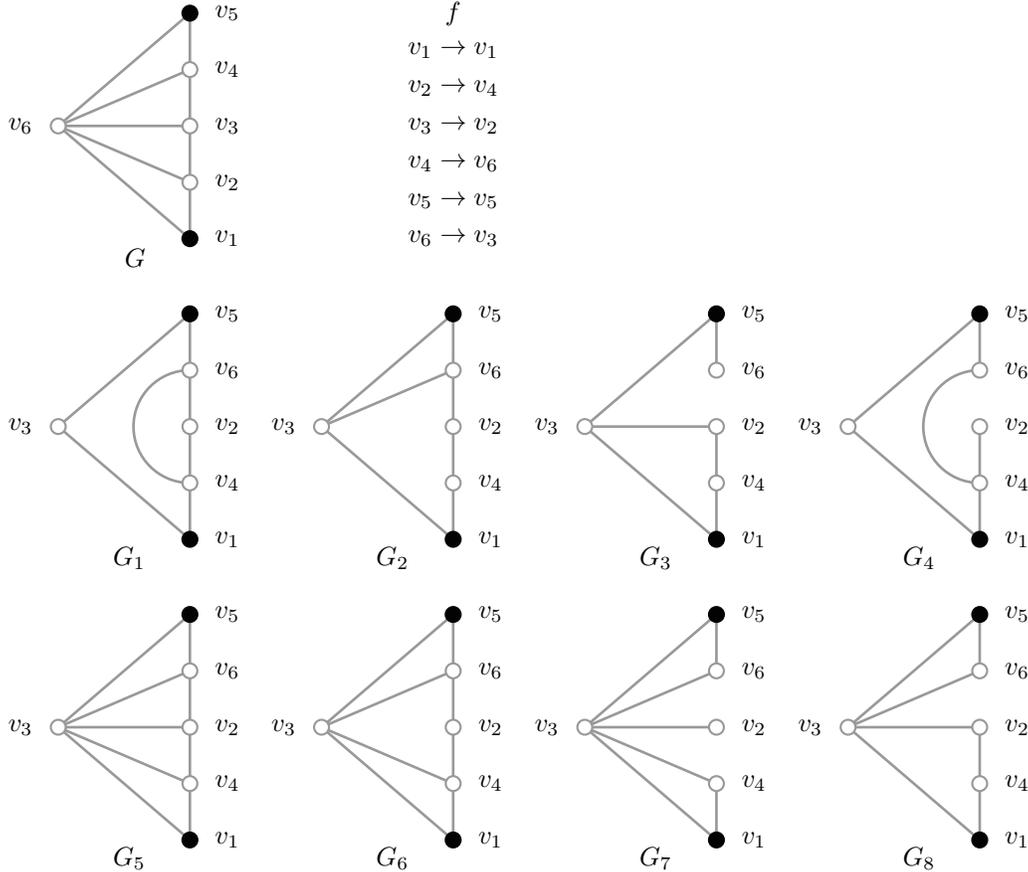

Now, if ${\rm {\bf  B}}_{D(G)-2}(B)=V$, then ${\cal{G}}_B$ consists of graphs isomorphic to each other, having the basis $B$ in common and, as a consequence,  for  any non-empty subfamily ${\cal{H}} \subseteq {\cal{G}}_B$ we have $\Sd({\cal{H}})=\dim(G)$. As the next result shows, this conclusion on $\Sd({\cal{H}})$ need not be restricted to the case ${\rm {\bf{B}}}_{D(G)-2}(B)=V$.


\begin{theorem}\label{Permutation-adimension}
Any  metric  basis $B$ of a connected graph $G$ is  a simultaneous metric generator for  any  family of connected graphs ${\cal {H}} \subseteq {\cal{G}}_B$. Moreover, if   $G \in {\cal{H}}$, then
 $$\Sd({\cal  H})=\dim(G).$$
\end{theorem}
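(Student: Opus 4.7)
The aim is to show that any metric basis $B$ of $G$ remains a metric generator for every $G'\in {\cal G}_B$; once this is established, the equality $\Sd({\cal H})=\dim(G)$ for any ${\cal H}\subseteq {\cal G}_B$ with $G\in {\cal H}$ follows at once from Observation~\ref{cotaTrivialSimultaneous}, which supplies the matching lower bound $\dim(G)\le \Sd({\cal H})$. The case $D(G)=1$ is immediate because $G$ is then complete and ${\cal G}_B=\{G\}$, so I assume throughout that $D(G)\ge 2$. A key preliminary observation is that at most one vertex of $V$ can lie outside ${\rm {\bf  B}}_{D(G)-1}(B)$: two such vertices would share the distance profile $(D(G),\ldots,D(G))$ with respect to $B$ in $G$, contradicting that $B$ resolves $G$.

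Fix $f\in {\cal S}(B)$ and $G'\in {\cal G}_f$, and take two distinct vertices $x,y\in V$; write $x=f(u)$, $y=f(w)$ with $u\ne w$. If both $u$ and $w$ lie in ${\rm {\bf  B}}_{D(G)-1}(B)$, I pick $b\in B$ that resolves $u,w$ in $G$, and Corollary~\ref{Lemma-Paths-in-G'} gives $d_{G'}(b,x)=d_G(b,u)\ne d_G(b,w)=d_{G'}(b,y)$, so $b$ resolves $x,y$ in $G'$. Otherwise exactly one of $u,w$, say $u$, lies outside ${\rm {\bf  B}}_{D(G)-1}(B)$, so $d_G(b,u)=D(G)$ for every $b\in B$, while $w$ admits some $b^*\in B$ with $d_G(b^*,w)\le D(G)-1$.

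The main obstacle is that Corollary~\ref{Lemma-Paths-in-G'} does not apply directly to $f(u)$, so I would establish by a separate argument that $d_{G'}(b,f(u))\ge D(G)$ for every $b\in B$. Suppose for contradiction that some shortest $b$-$f(u)$ path in $G'$ has length $j\le D(G)-1$; write it as $b=f(v_0),f(v_1),\ldots,f(v_j)=f(u)$. I would show by induction on $i$ that $v_0 v_1\cdots v_i$ is a path in $G$: the base case is $v_0=b\in {\rm {\bf  B}}_{D(G)-2}(B)$; assuming the claim at step $i\le j-1\le D(G)-2$ yields $d_G(b,v_i)\le i\le D(G)-2$, hence $v_i\in {\rm {\bf  B}}_{D(G)-2}(B)$, so the condition $N_{G'}(f(v_i))=f(N_G(v_i))$ defining ${\cal G}_f$ forces $v_{i+1}\in N_G(v_i)$. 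The resulting walk from $b$ to $u$ in $G$ has length $j\le D(G)-1$, contradicting $d_G(b,u)=D(G)$.

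With this claim in hand, $d_{G'}(b^*,x)\ge D(G)$ while $d_{G'}(b^*,y)=d_G(b^*,w)\le D(G)-1$ by Corollary~\ref{Lemma-Paths-in-G'}, so $b^*$ resolves $x,y$ in $G'$. Thus $B$ is a simultaneous metric generator for ${\cal H}$, whence $\Sd({\cal H})\le |B|=\dim(G)$; combined with the lower bound $\dim(G)\le \Sd({\cal H})$ from Observation~\ref{cotaTrivialSimultaneous}, we obtain $\Sd({\cal H})=\dim(G)$.
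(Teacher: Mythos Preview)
Your proof is correct and follows essentially the same strategy as the paper: split into cases according to whether the $f$-preimages lie in ${\rm {\bf B}}_{D(G)-1}(B)$, invoke Corollary~\ref{Lemma-Paths-in-G'} when both do, and for the remaining case show that the image of the unique vertex outside ${\rm {\bf B}}_{D(G)-1}(B)$ is at distance at least $D(G)$ from every $b\in B$ in $G'$. Your explicit inductive pull-back of a short $G'$-path to a $G$-path is exactly the content of Remark~\ref{path_preserving} in the reverse direction; the paper invokes Corollary~\ref{Lemma-Paths-in-G'} at that point (slightly loosely, since its hypothesis $v\in {\rm {\bf B}}_{D(G)-1}(B)$ is not met), so your version is in fact a touch more careful there.
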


\begin{proof}
Assume that $B$ is a metric basis of a connected graph $G=(V,E)$, $f\in {\cal S}(B)$ and  $G'\in {\cal G}_f$.
We shall show that $B$ is a metric generator for $G'$.
To this end, we take two different vertices $u',v'\in V-B$ of $G'$ and the corresponding vertices $u,v\in V$ of $G$ such that $f(u)=u'$ and $f(v)=v'$.
Since $u\ne v$ and $u,v\not\in B$, there exists  $b\in B$ such that $d_G(u,b)\neq d_G(v,b)$. Now,  consider the following two cases for $u,v$.
\\
\\
\noindent (1) $u,v\in {\rm {\bf  B}}_{D(G)-1}(B)$. In this case, since $d_G(u,b)\neq d_G(v,b)$, Corollary \ref{Lemma-Paths-in-G'} leads to    $d_{G'}(u',b)\neq d_{G'}(v',b)$.
\\
\\
\noindent (2) $u\in {\rm {\bf  B}}_{D(G)-1}(B)$ and $v\not\in {\rm {\bf  B}}_{D(G)-1}(B)$. By Corollary \ref{Lemma-Paths-in-G'}, $d_{G'}(u',b)\le D(G)-1$ and,  if $d_{G'}(v',b)\le D(G)-1$, then $d_{G}(v,b)\le D(G)-1$, which is  not possible since $v\not\in {\rm {\bf  B}}_{D(G)-1}(B)$. Hence, $d_{G'}(v',b)\ge D(G)$ and so $d_{G'}(u',b)\neq d_{G'}(v',b)$.
\\
\\
Notice that since $B$ is a metric basis of $G$,  the case $u,v\not\in {\rm {\bf  B}}_{D(G)-1}(B)$ is not possible.

According to the two cases above, $B$ is a metric generator for $G'$
 and, as a consequence, $B$ is also a simultaneous metric generator for  any  family of connected graphs ${\cal  H} \subseteq {\cal  G}_B$. Thus $\Sd({\cal  H})\le |B| =\dim(G)$ and, if  $G\in {\cal  H}$, then $\Sd({\cal  H})\ge   \dim(G)$. Therefore, the result follows.
\end{proof}

Figure \ref{ExSimultaneousBasis}  shows a graph $G$ for which $B=\{v_1,v_5\}$ is a metric basis. The map $f$ belongs to the stabilizer of $B$ and $\{G_1,...,G_8\}$ is a subfamily of ${\cal  G}_f$. In this case, the family  ${\cal  G}_B$ contains $1344$ different connected graphs; $48$ of them are paths and $B$ is a metric basis of the remaining $1296$ connected graphs.

In Theorem \ref{familyCycles} we showed that if a family $\cal{F}$ of cycles of order $n$ on the same vertex set is selected, then the simultaneous metric dimension of this family is guaranteed to be $2$ if $|{\cal{F}}| < n-1$. Moreover, we showed that there is a family of $n-1$ cycles of order $n$ on the same vertex set, whose simultaneous dimension exceeds $2$. In this section we showed that if $G=(V,E)$ is a fixed connected graph with a given basis $B$, then there is a large number of distinct labeled graphs $G'$ such that $G$ and $G'$ share a large common induced subgraph and such that the simultaneous dimension of this family is $\dim(G)$.  
Thus if $G$ is a cycle of even order $n \ge 6$ and the vertices in the metric basis $B$ are adjacent, then ${\cal  G}_B$ consist of $2(n-2)!$  connected labelled graphs; half of them are   cycles and the remaining are  paths of order $n$. If the vertices in the metric basis $B$ are not adjacent, then ${\cal  G}_B$ consist of $(n-2)!$ cycles of order $n$.

\section{The Simultaneous Metric Dimension of Trees}

It is known, see \cite{Chartrand2000}, that the metric dimension of any given tree can be computed in polynomial time.
To describe one such algorithm, we begin by defining a few terms. A vertex of degree at least $2$ in a graph $G$ is called an {\em interior vertex}.
The set of interior vertices of graph $G$ is denoted by ${\mathcal I}(G)$. A vertex of degree at least 3 is called a \textit{major vertex} of $G$.
Any leaf $u$ of $G$ is said to be a {\em terminal vertex of a major vertex} $v$ of $G$ if $d(u,v)<d(u,w)$ for every other major vertex $w$ of $G$.
The {\em terminal degree} $ter(v)$ of a major vertex $v$ is the number of terminal vertices of $v$, i.e., the number of paths in $G-v$.
A major vertex $v$ of $G$ is an \textit{exterior major vertex} of $G$ if it has positive terminal degree. The set of exterior major vertices of graph $G$ is denoted by ${\mathcal M}(G)$.
It was shown in \cite{Chartrand2000} that a metric generator $W$ of a tree $T$ may be constructed as follows: for each exterior major vertex of $T$
select a vertex from each of the paths of $T-v$ except from exactly one such path and place it in $W$. So $\dim(T) = \sum_{w \in {\mathcal M}(T)} (ter(w)-1)$.

It is natural then to ask whether the simultaneous metric dimension of families of trees can be found in polynomial time.
In this section we show that this is a difficult problem. We obtain sharp bounds for the metric dimension of any given collection of trees and for families of so called `dynamic tree networks'.

\subsection{Computability of the Simultaneous Metric Dimension for Trees}

We show that the problem of finding the simultaneous metric dimension (when stated as a decision problem) is $NP$-complete for families of trees.

\medskip

\noindent{\bf Simultaneous Metric Dimension (SMD)}\\
INSTANCE: A family ${\mathcal{G}}=\{G_1,G_2, \ldots, G_k\}$ of (labeled) graphs on the same vertex set $V$ and integer $B$, $1 \le B \le |V|-1$.\\
QUESTION: Is $\Sd({\mathcal{G}}) \le B$?

\medskip

We use the transformation from the {\bf Hitting set Problem} which was shown to be NP-complete by Karp \cite{Karp1972}.

\medskip
 \noindent{\bf Hitting Set Problem (HSP)}\\
INSTANCE: A collection $C$ of nonempty subsets of a finite set $S$ and a positive integer $k \le |S|$.\\
QUESTION: Is there a subset $S' \subseteq S$ with $|S'| \le K$ such that $S'$ contains at least one element from each subset in $C$?

\medskip

\begin{theorem}\label{NP_completeness_of_SMD}
The Simultaneous Metric Dimension Problem (SMD) is NP-complete for families of trees.
\end{theorem}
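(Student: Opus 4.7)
The plan is to establish membership in NP and then prove NP-hardness by reduction from the Hitting Set Problem. Membership in NP is immediate: given a candidate $W\subseteq V$ with $|W|\le B$, I precompute the distance matrices of each $G_i$ by BFS and then verify, for every pair $\{u,v\}\subseteq V$ and every $G_i\in\mathcal{G}$, that some $w\in W$ satisfies $d_{G_i}(w,u)\neq d_{G_i}(w,v)$. This runs in time polynomial in $|V|$ and $k$.

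For the reduction, given an HSP instance $(S=\{s_1,\ldots,s_n\},\mathcal{C}=\{C_1,\ldots,C_m\},K)$, I construct a family $\mathcal{G}$ of trees on a common vertex set $V$ consisting of a distinguished vertex $v_i$ for each $s_i\in S$ together with a collection of auxiliary vertices used by the gadgets. The family $\mathcal{G}$ contains one ``element'' tree $T_j$ per set $C_j$, plus a small number of global ``pinning'' trees. Each $T_j$ is built so that, \emph{modulo} a fixed auxiliary set $A^{*}$ that the pinning trees force into every simultaneous metric generator, the requirement that $W$ resolve $T_j$ reduces to the single condition $W\cap\{v_i:s_i\in C_j\}\neq\emptyset$. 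Setting $B=|A^{*}|+K$ then gives $\Sd(\mathcal{G})\le B$ if and only if $\mathcal{C}$ admits a hitting set of size at most $K$.

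A concrete candidate gadget for $T_j$ is a ``double broom'': two auxiliary leaves $x_j,y_j$ joined to a common auxiliary centre $c_j$ by two paths of equal length whose internal vertices are exactly $\{v_i:s_i\in C_j\}$, while the vertices $\{v_i:s_i\notin C_j\}$ and the remaining auxiliaries are attached as pendants of $c_j$ so that they lie at equal distance from $x_j$ and $y_j$. In such a tree the set of vertices resolving the pair $(x_j,y_j)$ is exactly the internal vertices of the two paths, namely $\{v_i:s_i\in C_j\}$, so any metric generator of $T_j$ must hit $C_j$; the pinning trees handle every other pair of $T_j$ through the vertices of $A^{*}$.

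The hard part will be making the reduction airtight. I need to verify, using the characterization of metric bases of trees in terms of exterior major vertices and their terminal degrees, that in each $T_j$ every pair other than $(x_j,y_j)$ is already resolved by $A^{*}$, and that $|A^{*}|+1$ vertices (the pinned set together with any single $v_i$ with $s_i\in C_j$) suffice for $T_j$ alone. Moreover, $|A^{*}|$ must be independent of $j$, and the pinning trees must jointly force $A^{*}$ into every simultaneous metric generator without imposing additional constraints on the $v_i$'s. Once these bookkeeping points are settled, polynomial-time constructibility and the forward-backward equivalence between hitting sets and simultaneous metric generators follow directly.
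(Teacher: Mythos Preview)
Your high-level plan---reduce from Hitting Set with one tree per set $C_j$---matches the paper's, but the concrete ``double broom'' gadget has a structural defect that is not mere bookkeeping. Hanging all of $\{v_i:s_i\notin C_j\}$ (together with the unused auxiliaries) as pendants of the centre $c_j$ makes them pairwise twins in $T_j$; hence every metric generator of $T_j$ must already contain all but one of these pendants. These constraints on the $v_i$'s come from the gadget trees themselves, not from your pinning trees, and across the family they force nearly all of $S$ into any simultaneous generator, destroying the intended equivalence with a size-$K$ hitting set. Two smaller slips: $x_j$ and $y_j$ themselves resolve the pair $(x_j,y_j)$, so your ``exactly $\{v_i:s_i\in C_j\}$'' is false; and splitting $C_j$ into two equal-length paths imposes a parity condition on $|C_j|$.

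The paper avoids all of this with a much lighter construction and no pinning trees. Take $V=S\cup\{u_1,u_2\}\cup\{w_1,\ldots,w_k\}$. The tree $T_i$ has exactly two exterior major vertices: at one end the twin leaves $u_1,u_2$; at the other end the leaf $w_i$ together with a pendant path on the vertices of $C_i$; the spine between these two ends is a path on $(S\setminus C_i)\cup(W\setminus\{w_i\})$. Thus every vertex of $S\setminus C_i$ has degree~$2$ in $T_i$ and imposes no constraint, while any metric generator of $T_i$ must contain one of $\{u_1,u_2\}$ and one vertex of $\{w_i\}\cup C_i$. With $B=K+1$ one gets $\Sd(\mathcal{T})\le B$ iff there is a hitting set of size $\le K$ (any $w_i$ appearing in a simultaneous generator can be swapped for an element of $C_i$). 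The idea you are missing is to place the non-$C_j$ elements on a \emph{path} rather than as pendants, so they are interior vertices and create no twin obligations.
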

\begin{proof}
It is easily seen that SMD is in $NP$.

Let $C=\{C_1,C_2, \ldots, C_k\}$ be a family of nonempty subsets of a finite set $S=\{v_1,v_2, \ldots, v_n\}$ and let $K$ be a positive integer such that $K \le |S|$.
Let $U= \{u_1,u_2\}$, $W=\{w_1,w_2, \ldots, w_k\}$ and set $V = S \cup U \cup W$ where the sets $S$, $U$ and $W$ are pairwise disjoint.

We now construct a family of $k$ trees $T_1, T_2, \ldots, T_k$ on $V$ as follows:  For each $i$, $1 \le i \le k$ let $P_i$ be a path on the vertices of $C_i$
and $Q_i$ a path on the vertices of $(S-C_i) \cup(W-\{w_i\})$. Let $T_i$ be obtained from $P_i$, $Q_i$ and the vertices $u_1,u_2, w_i$ by joining both $u_1$ and $u_2$
to the one leaf of $Q_i$ and then joining $w_i$ and one leaf of $P_i$ to the other end vertex of $Q_i$, see Figure \ref{labeled trees} for an illustration.
Let ${\mathcal{T}} =\{T_1, T_2, \ldots, T_k\}$ and let $B = K+1$. Then there is a subset $S'$ of $S$ with $|S'| \le K$ such that $S'$ contains at least one element
from each $C_i$, $1 \le i \le k$ if and only if $\Sd({\mathcal{T}}) \le B$. This way we have described a polynomial transformation of HSP to SMD.
\end{proof}

\begin{figure}[h]
\begin{center}

\begin{tikzpicture}
[inner sep=0.7mm, place/.style={circle,draw=black!40,
fill=white,thick},xx/.style={circle,draw=black!99,fill=black!99,thick},
transition/.style={rectangle,draw=black!50,fill=black!20,thick},line width=1pt,scale=0.5]
\coordinate (A1) at (0,0);
\coordinate (B1) at (0,2);
\coordinate (C1) at (1,1);
\coordinate (D1) at (2,1);
\coordinate (E1) at (3,1);
\coordinate (F1) at (4,1);
\coordinate (G1) at (5,0);
\coordinate (H1) at (5,2);
\coordinate (I1) at (6,3);
\coordinate (J1) at (7,4);

\coordinate (A2) at (9,0);
\coordinate (B2) at (9,2);
\coordinate (C2) at (10,1);
\coordinate (D2) at (11,1);
\coordinate (E2) at (12,1);
\coordinate (F2) at (13,1);
\coordinate (G2) at (14,0);
\coordinate (H2) at (14,2);
\coordinate (I2) at (15,3);
\coordinate (J2) at (16,4);

\coordinate (A3) at (18,0);
\coordinate (B3) at (18,2);
\coordinate (C3) at (19,1);
\coordinate (D3) at (20,1);
\coordinate (E3) at (21,1);
\coordinate (F3) at (22,1);
\coordinate (G3) at (23,1);
\coordinate (H3) at (24,0);
\coordinate (I3) at (24,2);
\coordinate (J3) at (25,3);

\draw[black!40] (A1) -- (C1) -- (D1) -- (E1) -- (F1) -- (H1) -- (I1) -- (J1);
\draw[black!40] (B1) -- (C1);
\draw[black!40] (G1) -- (F1);

\draw[black!40] (A2) -- (C2) -- (D2) -- (E2) -- (F2) -- (H2) -- (I2) -- (J2);
\draw[black!40] (B2) -- (C2);
\draw[black!40] (G2) -- (F2);

\draw[black!40] (A3) -- (C3) -- (D3) -- (E3) -- (F3) -- (G3) -- (I3) -- (J3);
\draw[black!40] (B3) -- (C3);
\draw[black!40] (G3) -- (H3);

\node at (A1) [place]  {};
\coordinate [label=center:{$u_1$}] (u11) at (-0.5,2.5);
\node at (B1) [place]  {};
\coordinate [label=center:{$u_2$}] (u12) at (-0.5,0.5);
\node at (C1) [place]  {};
\coordinate [label=center:{$v_4$}] (v14) at (1.3,1.5);
\node at (D1) [place]  {};
\coordinate [label=center:{$v_5$}] (v15) at (2.3,1.5);
\node at (E1) [place]  {};
\coordinate [label=center:{$w_2$}] (w12) at (3.3,1.5);
\node at (F1) [place]  {};
\coordinate [label=center:{$w_3$}] (w13) at (3.7,0.5);
\node at (G1) [place]  {};
\coordinate [label=center:{$w_1$}] (w11) at (5.5,0.5);
\node at (H1) [place]  {};
\coordinate [label=center:{$v_3$}] (v13) at (5.5,1.5);
\node at (I1) [place]  {};
\coordinate [label=center:{$v_2$}] (v12) at (6.5,2.5);
\node at (J1) [place]  {};
\coordinate [label=center:{$v_1$}] (v11) at (7.5,3.5);

\node at (A2) [place]  {};
\coordinate [label=center:{$u_2$}] (u22) at (8.5,0.5);
\node at (B2) [place]  {};
\coordinate [label=center:{$u_1$}] (u21) at (8.5,2.5);
\node at (C2) [place]  {};
\coordinate [label=center:{$v_1$}] (v21) at (10.3,1.5);
\node at (D2) [place]  {};
\coordinate [label=center:{$v_5$}] (v25) at (11.3,1.5);
\node at (E2) [place]  {};
\coordinate [label=center:{$w_1$}] (w21) at (12.3,1.5);
\node at (F2) [place]  {};
\coordinate [label=center:{$w_3$}] (w23) at (12.7,0.5);
\node at (G2) [place]  {};
\coordinate [label=center:{$w_2$}] (w22) at (14.5,0.5);
\node at (H2) [place]  {};
\coordinate [label=center:{$v_4$}] (v24) at (14.5,1.5);
\node at (I2) [place]  {};
\coordinate [label=center:{$v_3$}] (v23) at (15.5,2.5);
\node at (J2) [place]  {};
\coordinate [label=center:{$v_2$}] (v22) at (16.5,3.5);

\node at (A3) [place]  {};
\coordinate [label=center:{$u_2$}] (u32) at (17.5,0.5);
\node at (B3) [place]  {};
\coordinate [label=center:{$u_1$}] (u31) at (17.5,2.5);
\node at (C3) [place]  {};
\coordinate [label=center:{$v_1$}] (v31) at (19.3,1.5);
\node at (D3) [place]  {};
\coordinate [label=center:{$v_2$}] (v32) at (20.3,1.5);
\node at (E3) [place]  {};
\coordinate [label=center:{$v_3$}] (v33) at (21.3,1.5);
\node at (F3) [place]  {};
\coordinate [label=center:{$w_1$}] (w31) at (22.3,1.5);
\node at (G3) [place]  {};
\coordinate [label=center:{$w_2$}] (w32) at (22.7,0.5);
\node at (H3) [place]  {};
\coordinate [label=center:{$w_3$}] (w33) at (24.5,0.5);
\node at (I3) [place]  {};
\coordinate [label=center:{$v_5$}] (v35) at (24.5,1.5);
\node at (J3) [place]  {};
\coordinate [label=center:{$v_4$}] (v34) at (25.5,2.5);

\coordinate [label=center:{$T_1$}] (T1) at (3,-1);
\coordinate [label=center:{$T_2$}] (T2) at (12,-1);
\coordinate [label=center:{$T_3$}] (T3) at (21,-1);

\end{tikzpicture}

\caption{A transformation from HSP to SMD for $S=\{v_1,v_2, \ldots,v_5\}$ and $C =\{\{v_1,v_2,v_3\},\{v_2,v_3,v_4\},\{v_4,v_5\}\}$}
\label{labeled trees}
\end{center}
\end{figure}
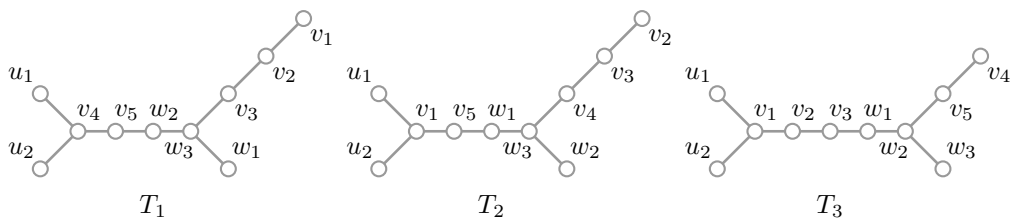

\subsection{Bounds for the Simultaneous metric Dimension of Families of Trees}

The result from the previous section suggest finding bounds for the simultaneous metric dimension of families of trees. We establish next a sharp upper bound for such families.


\begin{proposition}\label{FamilyTrees}
Let ${\mathcal T}=\{T_1, T_2, \ldots , T_k\}$ be a family of trees, which are different from paths,  defined on a common vertex set $V$, and let $S_{{\mathcal I}} = \underset{i=1}{\overset{k}{\bigcap}} {\mathcal I}(T_i)$ be the
set of vertices that are simultaneously interior vertices of every tree $T_i \in {\mathcal T}$. Then $$ \Sd({\mathcal T}) \leq \vert V \vert - \vert S_{{\mathcal I}} \vert - 1. $$
\end{proposition}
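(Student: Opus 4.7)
The plan is to exhibit a simultaneous metric generator $W$ of cardinality $|V| - |S_{\mathcal{I}}| - 1$. Since every $T_i$ is a tree on $|V|\ge 2$ vertices, it has at least one leaf, so $\mathcal{I}(T_i) \subsetneq V$ for each $i$ and therefore $S_{\mathcal{I}} \subsetneq V$. I pick any vertex $v_0 \in V \setminus S_{\mathcal{I}}$ and set
$$W \;=\; V \setminus (S_{\mathcal{I}} \cup \{v_0\}),$$
which has $|W| = |V| - |S_{\mathcal{I}}| - 1$. It will suffice to show that $W$ resolves every $T_i$.

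The engine of the argument is the following reading of the basis construction of \cite{Chartrand2000} recalled at the beginning of this section: \emph{if $T$ is a non-path tree and $u$ is any leaf of $T$, then $T$ admits a metric basis contained in its leaf set $L(T) = V \setminus \mathcal{I}(T)$ that avoids $u$}. To see this, note that $u$ is a terminal vertex of a unique exterior major vertex $v = v(u)$. If $ter(v) = 1$, then the recalled construction selects no vertex from $u$'s leg at all, so $u$ is automatically outside every basis produced by it. If $ter(v) \ge 2$, I am free to designate $u$'s leg as the single ``skipped'' leg at $v$ and place a terminal leaf of each of the remaining $ter(v)-1$ legs into the basis; doing the analogous choice at every other exterior major vertex yields the required leaf basis avoiding $u$.

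With this tool I treat each $T_i$ separately. If $v_0$ is a leaf of $T_i$, apply the lemma to $T_i$ with $u = v_0$ to obtain a basis $B_i \subseteq L(T_i)$ with $v_0 \notin B_i$; if instead $v_0 \in \mathcal{I}(T_i)$, take any leaf basis $B_i$ of $T_i$, which avoids $v_0$ trivially. In either case,
$$B_i \;\subseteq\; L(T_i) \;=\; V \setminus \mathcal{I}(T_i) \;\subseteq\; V \setminus S_{\mathcal{I}},$$
using $S_{\mathcal{I}} \subseteq \mathcal{I}(T_i)$ for the last inclusion, and $v_0 \notin B_i$. Hence $B_i \subseteq W$, so $W$ contains a metric generator of $T_i$ and is therefore itself a metric generator of $T_i$. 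Since this holds for each $i$, $W$ is a simultaneous metric generator of $\mathcal{T}$ and $\Sd(\mathcal{T}) \le |V| - |S_{\mathcal{I}}| - 1$.

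The only genuinely delicate step is the leaf-avoiding basis lemma, which I expect to be the main (mild) obstacle; beyond that the proof is bookkeeping. However, the lemma is a direct consequence of the explicit basis construction already quoted from \cite{Chartrand2000}, so a clean case split on whether $ter(v(u)) = 1$ or $ter(v(u)) \ge 2$ should dispatch it without difficulty.
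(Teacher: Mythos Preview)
Your proof is correct and follows essentially the same approach as the paper's. Both arguments rely on the observation, drawn from the Chartrand et al.\ construction, that each non-path tree $T_i$ admits a metric basis contained in its leaf set and avoiding any prescribed leaf; the only cosmetic difference is that you exhibit the simultaneous generator $W = V\setminus(S_{\mathcal I}\cup\{v_0\})$ directly, whereas the paper reaches the same conclusion via the union $S=\bigcup_i W_i$ of individually chosen bases.
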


\begin{proof}
Using the ideas that underly the validity of the algorithm for constructing a (minimum) resolving set of a tree described in \cite{Chartrand2000}, it is possible to construct a set $S$, which is simultaneously a
metric generator for every tree $T_i \in {\mathcal T}$ by constructing metric generators $W_i$ for every tree $T_i$ as described and letting $S = \underset{i=1}{\overset{k}{\bigcup}} {W_i}$.
Any such set $S$ will not contain a vertex that is not in $S_{{\mathcal I}}$, so $$\Sd({\mathcal T}) \leq \vert S \vert \leq \vert V \vert - \vert S_{{\mathcal I}} \vert$$

Moreover, for every vertex $u \in V - S_{{\mathcal I}}$ and every tree $T_i \in {\mathcal T}$, either:
\begin{enumerate}
\item[(i)]$u$ is a terminal vertex of an exterior major vertex $x$ of $T_i$, in which case every other terminal vertex of $x$, other than $u$, may be selected when constructing $W_i$, and hence $W_i$ may be constructed in such a way that $u \notin W_i$; or
\item[(ii)] $u$ is not an end-vertex of $T_i$, in which case $W_i$ may be constructed in such a way that $u \notin W_i$.
\end{enumerate}

Thus, for every vertex $u \in V - S_{{\mathcal I}}$, the set $S$ may be constructed in such a way that $u \notin S$ and, as a result,  $\Sd({\mathcal T}) \leq \vert S \vert \leq \vert V \vert - \vert S_{{\mathcal I}} \vert - 1$.
\end{proof}

The above inequality is sharp. For instance, equality is achieved for the graph family shown in Figure~\ref{FigTightBoundTreeFamily}, where $S_{\mathcal I} = \{m_1, m_2, i_1\}$, any triple of leaves is a simultaneous metric generator, e.g. $\{e_1, e_2, e_3\}$, whereas no pair of vertices is a simultaneous metric generator. Thus  $\Sd({\mathcal T}) = 3 = \vert V \vert - \vert S_{\mathcal I} \vert - 1$.

\begin{figure}[h]
\label{FigTightBoundTreeFamily}
\begin{center}
\begin{tikzpicture}
[inner sep=0.7mm, place/.style={circle,draw=black!40,
fill=white,thick},xx/.style={circle,draw=black!99,fill=black!99,thick},
transition/.style={rectangle,draw=black!50,fill=black!20,thick},line width=1pt,scale=0.5]
\coordinate (A) at (-15,6);
\coordinate (B) at (-11,4);
\coordinate (C) at (-13,2);
\coordinate (D) at (-9,4);
\coordinate (E) at (-7,6);
\coordinate (F) at (-7,2);
\coordinate (S) at (-13,5);

\coordinate (G) at (-5,6);
\coordinate (H) at (-3,4);
\coordinate (I) at (-5,2);
\coordinate (J) at (1,4);
\coordinate (K) at (3,6);
\coordinate (L) at (3,2);
\coordinate (U) at (-1,4);

\coordinate (M) at (5,6);
\coordinate (N) at (7,4);
\coordinate (O) at (5,2);
\coordinate (P) at (9,4);
\coordinate (Q) at (11,6);
\coordinate (R) at (13,2);
\coordinate (T) at (11,3);

\draw[black!40] (A) -- (S) -- (B) -- (D) -- (E);
\draw[black!40] (B) -- (C);
\draw[black!40] (D) -- (F);

\draw[black!40] (G) -- (H) -- (U) -- (J) -- (K);
\draw[black!40] (I) -- (H);
\draw[black!40] (J) -- (L);

\draw[black!40] (M) -- (N) -- (P) -- (Q);
\draw[black!40] (N) -- (O);
\draw[black!40] (P) -- (T) -- (R);

\node at (A) [xx]  {};
\coordinate [label=center:{$e_1$}] (e11) at (-14.5,6.5);
\node at (B) [place]  {};
\coordinate [label=center:{$m_1$}] (m11) at (-10.5,4.5);
\node at (C) [xx]  {};
\coordinate [label=center:{$e_2$}] (e12) at (-13.5,2.5);
\node at (D) [place]  {};
\coordinate [label=center:{$m_2$}] (m12) at (-9.5,3.5);
\node at (E) [xx]  {};
\coordinate [label=center:{$e_3$}] (e13) at (-6.5,6.5);
\node at (F) [place]  {};
\coordinate [label=center:{$e_4$}] (e14) at (-6.5,2.5);
\node at (S) [place]  {};
\coordinate [label=center:{$i_1$}] (i1) at (-12.5,5.5);

\node at (G) [xx]  {};
\coordinate [label=center:{$e_1$}] (e21) at (-4.5,6.5);
\node at (H) [place]  {};
\coordinate [label=center:{$m_1$}] (m21) at (-2.5,4.5);
\node at (I) [xx]  {};
\coordinate [label=center:{$e_3$}] (e23) at (-5.5,2.5);
\node at (J) [place]  {};
\coordinate [label=center:{$m_2$}] (m22) at (0.5,3.5);
\node at (K) [xx]  {};
\coordinate [label=center:{$e_2$}] (e22) at (3.5,6.5);
\node at (L) [place]  {};
\coordinate [label=center:{$e_4$}] (e24) at (3.5,2.5);
\node at (U) [place]  {};
\coordinate [label=center:{$i_1$}] (i2) at (-0.5,4.5);

\node at (M) [xx]  {};
\coordinate [label=center:{$e_1$}] (e31) at (5.5,6.5);
\node at (N) [place]  {};
\coordinate [label=center:{$m_1$}] (m31) at (7.5,4.5);
\node at (O) [place]  {};
\coordinate [label=center:{$e_4$}] (e34) at (4.5,2.5);
\node at (P) [place]  {};
\coordinate [label=center:{$m_2$}] (m32) at (8.5,3.5);
\node at (Q) [xx]  {};
\coordinate [label=center:{$e_3$}] (e33) at (11.5,6.5);
\node at (R) [xx]  {};
\coordinate [label=center:{$e_2$}] (e32) at (13.5,2.5);
\node at (T) [place]  {};
\coordinate [label=center:{$i_1$}] (i3) at (11.5,3.5);

\coordinate [label=center:{$T_1$}] (T1) at (-10,1);
\coordinate [label=center:{$T_2$}] (T2) at (-1,1);
\coordinate [label=center:{$T_3$}] (T3) at (9,1);

\end{tikzpicture}

\end{center}
\caption{A family of trees ${\cal T} = \{T_1, T_2, T_3\}$ such that $\Sd({\cal T}) = 3 = \vert V \vert - \vert S_{\cal I} \vert - 1$.}

\end{figure}

However, there are families $\mathcal{T}$ of trees on the same vertex set for which the ratio $\frac{\Sd({\mathcal{T}})}{\vert V \vert - \vert S_{{\mathcal I}} \vert - 1}$ can be made arbitrarily small.  To see this let $r,s \ge 3$ be integers and let $V=\{(i,j) \vert 1 \le i \le r,~1 \le j \le s\} \cup\{x\}$. So $\vert V \vert = rs+1$. Let $T_1$ be the tree obtained from the paths $Q_i =(i,1)(i,2) \ldots (i,s)x$ for $1 \le i \le r$ by identifying the vertex $x$ from each of the paths. So $T_1$ is isomorphic to the tree obtained from the star $K_{1,r}$ by subdividing each edge $s-1$ times. For $2 \le j < s$ let $T_j$ be obtained from $T_1$ by adding the edge $(i,1)(i,j+1)$ and deleting the edge $(i,j)(i,j+1)$ for $1 \le i \le r$. Finally let $T_s$ be obtained from $T_1$ by adding the edge $(i,1)x$ and deleting the edge $(i,s)x$ for $1 \le i \le r$. Let ${\mathcal{T}}=\{T_j \vert 1\le j \le s\}$. Then $S_{\mathcal I} =\{x\}$. So $\vert V \vert -\vert S_{\mathcal I} \vert - 1 = rs-1$. It is not difficult to see that $\{(i,1) \vert 1 \le i \le r-1\}$ is a minimum resolving set for each $T_j$. Hence $\Sd({\mathcal{T}}) = r-1$. So $\frac{\Sd({\mathcal{T}})}{\vert V \vert - \vert S_{{\mathcal I}} \vert - 1} = \frac{r-1}{rs-1}$. By choosing $s$ large enough this can be made as small as we wish. Note also that this family of trees achieves the lower bound given in Observation \ref{cotaTrivialSimultaneous}.

\subsection{Simultaneous Resolving Sets in Dynamic Tree Networks}

Motivated by the results of the previous section we obtain sharp upper and lower bounds for the simultaneous metric dimension of families of trees on the same vertex set that can be obtained by starting from a given tree and making repeated small changes.  We say that a tree $T_2$ is obtained from a tree $T_1$ by an {\em  edge exchange} if there is an edge $e_1$ not in $T_1$ and an edge $f_1$  in $T_1$ such that $T_2 = T_1 +e_1-f_1$.

\begin{theorem}\label{tree_edge_exchange}
Let $T_1$ and $T_2$ be trees where $T_2$ is obtained from $T_1$ by an edge exchange. Then \[\dim(T_1) -2 \le \dim(T_2) \le \dim(T_1)+2.\]
If the upper bound is attained, then there is a metric basis for $T_2$ that contains a metric basis for $T_1$ and if the lower bound is attained, then there is a metric basis for $T_1$ that contains a metric basis for $T_2$. Moreover, these bounds are sharp.
\end{theorem}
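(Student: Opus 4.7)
The plan is to work with the closed formula $\dim(T)=\sum_{w\in\mathcal{M}(T)}(ter(w)-1)$ recalled in the excerpt, which holds for every tree that is not a path. Writing $\ell(T)$ for the number of leaves, and noting that each leaf of a non-path tree is a terminal vertex of a unique exterior major vertex, this rearranges to $\dim(T)=\ell(T)-|\mathcal{M}(T)|$. The case in which $T_1$ or $T_2$ is a path is easy: a short case analysis on where the added edge attaches shows that a single edge exchange applied to a path produces a tree of dimension at most $2$, so the bound $|\dim(T_2)-\dim(T_1)|\le 2$ is immediate. From now on I assume neither $T_i$ is a path.

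Write $T_2=T_1+xy-uv$, where $xy\notin T_1$ and $uv$ necessarily lies on the unique $x$-$y$ path of $T_1$. Only the (at most four, possibly coincident) vertices $u,v,x,y$ change degree, hence at most two vertices in $\{u,v\}$ newly become leaves and at most two in $\{x,y\}$ cease to be leaves, giving $|\ell(T_2)-\ell(T_1)|\le 2$. These same four vertices are also the only ones that can acquire or lose major-vertex status. The delicate point is that a major vertex $w\notin\{u,v,x,y\}$ may still switch its \emph{exterior} status because the edge swap reroutes exactly those paths of $T_1$ that use $uv$, and whether $w$ is exterior depends on which leaves it can reach without passing through another major vertex. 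I expect this rerouting analysis to be the main obstacle: one must show that the changes in $\ell$ and $|\mathcal{M}|$ conspire so that $\dim=\ell-|\mathcal{M}|$ changes by at most $2$.

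To produce the sharp bound and the containment assertion in a single stroke, I propose a constructive argument. Start with an arbitrary metric basis $W_1$ of $T_1$ built by the Chartrand et al.\ algorithm---picking one vertex from each of $ter(w)-1$ terminal paths at every $w\in\mathcal{M}(T_1)$. A case analysis on the degree and major-vertex status of $u,v,x,y$ in the two trees shows that $W_1$ already covers all but at most two of the terminal paths required at the exterior major vertices of $T_2$; adding one vertex per uncovered terminal path yields a resolving set $W_1\cup\{a,b\}$ of $T_2$ of cardinality at most $\dim(T_1)+2$. This proves $\dim(T_2)\le\dim(T_1)+2$, and when equality is attained, the constructed set is forced to be a metric basis of $T_2$ containing the metric basis $W_1$ of $T_1$. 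The reverse inequality and its companion containment then follow by applying the same argument to the reverse exchange $T_1=T_2+uv-xy$.

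For sharpness, take $T_1$ on $\{v_1,\dots,v_8\}$ consisting of an edge $v_2v_3$ with additional leaves $v_1,v_5,v_6$ attached at $v_2$ and $v_4,v_7,v_8$ attached at $v_3$; then $\mathcal{M}(T_1)=\{v_2,v_3\}$ with $ter(v_2)=ter(v_3)=3$, so $\dim(T_1)=4$. The exchange $T_2=T_1+v_1v_4-v_2v_3$ leaves $v_2,v_3$ as the exterior major vertices, now with terminal degree $2$ each, giving $\dim(T_2)=2=\dim(T_1)-2$. The metric basis $\{v_5,v_7\}$ of $T_2$ is contained in the metric basis $\{v_5,v_6,v_7,v_8\}$ of $T_1$, illustrating the lower-bound containment; interchanging the roles of $T_1$ and $T_2$ realises the upper bound with the same basis pair.
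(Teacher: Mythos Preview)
Your approach is essentially the paper's: use $\dim(T)=\ell(T)-|\mathcal{M}(T)|$, observe that only the four endpoints $u,v,x,y$ change degree, and conclude via a constructive extension of a basis by two vertices. You are in fact more careful than the paper on one point: the paper's entire argument for the inequality is the clause ``since $T_2$ has at most two more leaves than $T_1$'', which---as you rightly flag---does not by itself control $|\mathcal{M}(T_2)|-|\mathcal{M}(T_1)|$, because exterior status of major vertices outside $\{u,v,x,y\}$ can shift under the swap. Your constructive route (extend a basis of $T_1$ by two vertices to a resolving set of $T_2$) is what actually proves the bound; the paper only invokes that construction for the containment assertion, not for the inequality. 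One sharpening worth making explicit in place of your unspecified $\{a,b\}$: the two vertices to adjoin are the endpoints of the \emph{deleted} edge (your $u,v$; the paper's $x,y$), since every leg of $T_2$ that is not already a leg of $T_1$ must contain one of them---this is precisely the paper's claim for the containment. Your sharpness example (the double star with its bridge rerouted through two former leaves) differs from the paper's constructions (a subdivided $K_{1,3}$ attached to an arbitrary tree for the upper bound, a double star with the bridge deleted for the lower) but is correct and arguably tidier, and a single example serving both extremes is a nice touch.
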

\begin{proof}
Suppose $T_2 = T_1 +e_1-f_1$ where $e_1$ is an edge not in $T_1$ and $f_1$ is an edge in $T_1$. Let $e_1=uv$ and $f_1=xy$. By an earlier comment a metric basis for $T_2$ can be constructed by selecting for each major vertex, with positive terminal degree, all but one of its terminal vertices. Since $T_2$ is obtained from $T_1$ by an edge exchange it follows that the metric dimension of $T_2$ can be at most $2$ more than the metric dimension of $T_1$ since $T_2$ has at most two more leaves than $T_1$ and this upper bound is achieved only if $\sum_{w \in {\mathcal M}(T_2)} (ter(w)-1)=\sum_{w \in {\mathcal M}(T_1)} (ter(w)-1)+2$. Similarly, the metric dimension of $T_2$ can be at most two less than the metric dimension of $T_1$ and this lower bounds is achieved only if $u$ and $v$ are both terminal vertices of distinct exterior major vertices and $\sum_{w \in {\mathcal M}(T_2)} (ter(w)-1)+2=\sum_{w \in {\mathcal M}(T_1)} (ter(w)-1)$.

If the upper bound is attained, then a metric basis of $T_1$ together with $x$ and $y$ is a metric basis for $T_2$ with the specified properties. Also, if the lower bound is attained, then a metric basis for $T_2$ together with $u$ and $v$ is a metric basis for $T_1$ with the desired properties.

For the sharpness of the upper bound let $T_1'$ be the tree obtained from the star $K_{1,3}$ having center $v_1$ and leaves $u, v_{1,1}, v_{1,2}$ by subdividing the edge $uv_1$ twice. Let $x_1$ be the neighbour of $u$ in $T_1'$ and $y_1$ the neighbour of $v_1$ in $T_1'$. Now let $T$ be any tree that contains a vertex labeled $u$ but which is otherwise vertex disjoint from $T_1'$. Identify the vertex labeled $u$ in $T_1'$ with the vertex labeled $u$ in $T$ and join a new leaf $y$ to $u$. Let $T_1$ be the resulting tree. Now let $e_1 =uv_1$ and $f_1=x_1y_1$. If $T_2 = T_1 +e_1-f_1$, then $\dim(T_2) = \dim(T_1)+2$.

For the lower bound let $T_1$ be a double star obtained from two copies of the star $K_{1,r}$, $r \ge 3$ with centers $x$ and $y$, respectively by joining $x$ and $y$ with an edge. Let $u$ be a neighbour of $x$ and $v$ a neighbour of $y$. Let $e_1=uv$ and $f_1 =xy$ and let $T_2 = T_1+e_1-f_1$. Then $\dim(T_1) - 2 = \dim(T_2)$.

\end{proof}

\begin{corollary}
Let $T_1,T_2, \ldots, T_k$ be a sequence of trees such that $T_{i+1}$ is obtained from $T_i$ by an edge exchange for $1 \le i < k$. Then \[\dim(T_1) -2k \le \dim(T_k) \le \dim(T_1)+2k.\] If the upper bound is attained, there is a metric basis  for $T_k$ that contains a metric basis for $T_i$ for all $1 \le i \le k$ and if the lower bound is attained, there is a metric basis of $T_1$ that contains a metric basis for $T_i$ for $1 \le i \le k$. Moreover, these bounds are sharp.
\end{corollary}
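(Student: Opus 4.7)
My plan is to prove this corollary by straightforward induction on $k$, driven entirely by Theorem \ref{tree_edge_exchange}. The base case $k=1$ is immediate: no edge exchange occurs, and the chain $\dim(T_1) - 2 \le \dim(T_1) \le \dim(T_1) + 2$ is trivial. For the inductive step I would apply the hypothesis to the subsequence $T_1, \ldots, T_{k-1}$ to obtain $\dim(T_1) - 2(k-1) \le \dim(T_{k-1}) \le \dim(T_1) + 2(k-1)$, and then invoke Theorem \ref{tree_edge_exchange} on the single exchange $T_{k-1} \to T_k$, which shifts the dimension by at most two in either direction; summing gives the claimed bounds.

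For the attainment part, suppose the upper bound is achieved. A telescoping argument then forces each individual step to realize the maximum jump, $\dim(T_{i+1}) = \dim(T_i) + 2$. Inspecting the proof of Theorem \ref{tree_edge_exchange} shows that in this situation, starting from \emph{any} metric basis $B_i$ of $T_i$, the set $B_i \cup \{x_i, y_i\}$ is a metric basis of $T_{i+1}$, where $x_iy_i$ is the edge deleted when passing from $T_i$ to $T_{i+1}$. Starting from an arbitrary metric basis $B_1$ of $T_1$ and iteratively defining $B_{i+1} = B_i \cup \{x_i, y_i\}$, I would obtain a nested chain $B_1 \subseteq B_2 \subseteq \cdots \subseteq B_k$ of metric bases, so $B_k$ itself is a metric basis of $T_k$ containing a metric basis of each earlier $T_i$. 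The lower bound case is symmetric: each exchange now must drop the dimension by exactly $2$, and the dual construction from the theorem's proof produces a descending chain $B_1 \supseteq B_2 \supseteq \cdots \supseteq B_k$, making $B_1$ a metric basis of $T_1$ that contains a metric basis of every $T_i$.

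For sharpness, I would chain together the extremal configurations used in the sharpness argument of Theorem \ref{tree_edge_exchange}. For the upper bound, repeatedly attach fresh copies of the pendant subdivided-star gadget and perform the corresponding edge exchange, genuinely raising the dimension by $2$ at each step; for the lower bound, iterate the double-star-to-merged-star move, each application lowering the dimension by $2$.

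The delicate step is ensuring that the nested chain of bases can actually be realized, since Theorem \ref{tree_edge_exchange} asserts only that \emph{some} basis of $T_{i+1}$ contains \emph{some} basis of $T_i$. What makes the chain work is the stronger constructive form embedded in the proof of that theorem, which produces a basis of $T_{i+1}$ by augmenting an arbitrary basis of $T_i$ with the two endpoints of the removed (respectively added) edge; only this strengthened statement lets the induction propagate coherently through all $k$ trees at once.
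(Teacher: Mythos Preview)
Your argument is correct and follows the paper's route: the bounds by induction from Theorem~\ref{tree_edge_exchange}, the attainment clause via telescoping plus the constructive basis-augmentation embedded in that theorem's proof (a step you actually spell out more carefully than the paper, which merely restates the conclusion), and sharpness by iterating extremal configurations. The one place to tighten is the lower-bound sharpness sketch: the paper does not literally repeat the double-star exchange of the theorem---after a single step the tree is no longer a double star---but instead starts from a purpose-built $T_1$ (a star $K_{1,2k+1}$ with a claw $K_{1,3}$ attached at $k$ of its leaves) arranged so that a prescribed sequence of exchanges each drops the dimension by exactly two; your idea is right in spirit but needs such a concrete construction to go through.
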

\begin{proof} The bounds follow from Theorem \ref{tree_edge_exchange} and induction. Also, if the upper bound is attained, there is a metric basis  for $T_k$ that contains a metric basis for $T_i$ for all $1 \le i \le k$ and if the lower bound is attained, there is a metric basis of $T_1$ that contains a metric basis of $T_i$ for $1 \le i \le k$.

We now illustrate the sharpness of the given bounds. Consider first the upper bound.  Let $T_i'$ be the tree obtained from the star $K_{1,3}$, with center $v_i$ and leaves $u, v_{i,1}, v_{i,2}$, by subdividing the edge $uv_i$ twice. Let $x_i$ be the neighbour of $u$ in $T_i'$ and $y_i$ the neighbour of $v_i$ in $T_i'$. Identify the vertices labeled $u$ in each $T_i$ and then join a leaf $y$ to $u$ and let $T_1$ be the resulting tree. For $i = 1,2, \ldots, k$ let $e_i =uv_i$ and $f_i=x_iy_i$. Suppose $T_i$ has been defined for some $1 \le i <k$.  Let $T_{i+1} = T_i+e_i-f_i$. Then $\dim(T_k) \le \dim(T_1)+2k$.

For the lower bound let $T_1$ be obtained from a star $K_{1,2k+1}$ with center $x$ and leaves $\{u_1, u_2, \ldots, u_k,$ $ u_{k+1}\} \cup \{y_1, y_2, \ldots, y_k\}$ by joining three (new) leaves $v_i,z_i$ and $s_i$ to $y_i$ for $1 \le i \le k$. For $1 \le i \le k$ let $e_i = u_iv_i$ and $f_i = xy_i$. Suppose $T_i$ has been defined for some $i$, $1 \le i < k$. Let $T_{i+1} = T_i +e_i-f_i$. Then $\dim(T_1) -2k = \dim(T_k)$.

\end{proof}

\begin{corollary}
If $T_1$ and $T_2$ are any two trees on the same set of $n$ vertices such that $T_1$ and $T_2$ have $k$ edges in common where $k > n/2$, then \[\dim(T_1) -2(n-k-1) \le \dim(T_2) \le \dim(T_1)+2(n-k-1).\]
\end{corollary}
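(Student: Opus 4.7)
The plan is to reduce the statement directly to the preceding corollary by exhibiting a sequence of edge exchanges that transforms $T_1$ into $T_2$ in exactly $n-k-1$ steps. Since both trees have $n-1$ edges and share $k$ of them, one has $|E(T_1)\setminus E(T_2)| = |E(T_2)\setminus E(T_1)| = n-1-k$, which is precisely the length of the sequence I aim to build.

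The main step is the classical spanning-tree exchange property. Given any spanning tree $T$ of $V$ with $T \neq T_2$, I pick an edge $e \in E(T_2)\setminus E(T)$. Adding $e$ to $T$ produces a unique cycle $C$; because $T_2$ is acyclic, some edge $f \in E(C)\setminus\{e\}$ must lie outside $E(T_2)$. Then $T' = T + e - f$ is again a spanning tree of $V$, is obtained from $T$ by an edge exchange in the sense of the definition preceding Theorem~\ref{tree_edge_exchange}, and satisfies $|E(T')\cap E(T_2)| = |E(T)\cap E(T_2)| + 1$. Starting from $T_1$ and iterating this swap, I produce a sequence
\[
T_1 = S_0,\, S_1,\, \ldots,\, S_{n-k-1} = T_2
\]
of trees on $V$ in which each $S_{i+1}$ is obtained from $S_i$ by an edge exchange, and the last term equals $T_2$ because the size of the symmetric difference with $E(T_2)$ strictly decreases at every step and begins at $n-1-k$.

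Applying the preceding corollary to this sequence, whose length gives $n-k-1$ successive edge exchanges, then yields
\[
\dim(T_1) - 2(n-k-1) \le \dim(T_2) \le \dim(T_1) + 2(n-k-1),
\]
which is exactly the desired inequality.

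There is no genuine obstacle here: the argument is a one-step reduction to the preceding corollary via a classical matroid fact about spanning trees. I would explicitly note that the hypothesis $k > n/2$ is not actually used in the proof; it serves only to keep the bound informative, since when $k$ is close to $n-1$ the quantity $2(n-k-1)$ is small (so the estimate is sharp), whereas for small $k$ it exceeds the trivial bounds $1 \le \dim(T_2) \le n-1$ given by Observation~\ref{cotaTrivialSimultaneous}.
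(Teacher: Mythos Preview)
Your proof is correct and follows essentially the same approach as the paper: the paper's proof is the single sentence ``This follows from the fact that $T_2$ can be obtained from $T_1$ by a sequence of $n-k-1$ edge exchanges,'' and you have simply supplied the standard spanning-tree exchange argument that justifies this fact. Your closing observation that the hypothesis $k > n/2$ is not actually used is also correct and worth noting.
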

\begin{proof} This follows from the fact that $T_2$ can be obtained from $T_1$ by a sequence of $n-k-1$ edge exchanges.
\end{proof}

\section{Concluding Remarks}

In this paper we obtained sharp upper and lower bounds on the simultaneous metric dimension of families of (connected) graphs. We obtained exact values for this invariant in the case of some specific families of graphs. We showed that if $T_1, T_2,  \ldots, T_k$ is a sequence of trees such that each $T_i$, $2 \le i \le k$ is obtained from $T_{i-1}$ by an edge-exchange, then the metric dimension of the family differs from that of $T_1$ by at most $2k$. We showed, for a given connected graph $G$, that there is a large family of labeled graphs with the same vertex set having the same simultaneous metric dimension as $G$. Nevertheless it appears to be a difficult problem in general to find the exact value for the simultaneous metric dimension of a family of graphs on the same vertex set, even if the metric dimension of each member of the family is known.

\bibliographystyle{elsart-num-sort}

\end{document}